\theoremstyle{plain}
\newtheorem{Thm}{Theorem}[section]
\newtheorem{Lem}[Thm]{Lemma}
\newtheorem{Cor}[Thm]{Corollary}
\theoremstyle{definition}
\newtheorem{Def}[Thm]{Definition}
\numberwithin{equation}{section}
\begin{document}
\begin{center}
\textbf{Weakly nil clean graphs of rings }\\
\end{center}
\begin{center}
Ajay Sharma, Jayanta Bhattacharyya\\ and Dhiren Kumar Basnet$^1$ \\
\small{\it Department of Mathematical Sciences, Tezpur University,
 \\ Napaam, Tezpur-784028, Assam, India.\\
Email: ajay123@tezu.ernet.in,  jbhatta@tezu.ernet.in \\ and dbasnet@tezu.ernet.in}
\end{center}

\footnotetext[1]{corresponding author.}
\noindent \textit{\small{\textbf{Abstract:}  }}  In this article, the concept of nil clean graph of a ring has been generalised to weakly nil clean graph of a ring and graph theoretic properties like girth, clique number, diameter and chromatic index of the graph have been studied for a finite commutative ring.
\bigskip

\noindent \small{\textbf{\textit{Key words:}} Nil clean ring, weakly nil clean ring, weakly nil clean graph.} \\
\smallskip

\noindent \small{\textbf{\textit{$2010$ Mathematics Subject Classification:}}  05C75, 16N40, 16U99.} \\
\smallskip

\bigskip
\section{Introduction}

 Rings in this article are finite commutative rings with non zero identity. The set of idempotents, nilpotent elements of a ring $R$ and the $n\times n$ matrix ring over $R$ are denoted by $Idem(R)$, $Nil(R)$  and $M_n(R)$ respectively. An element $x$ of a ring $R$ is said to be a \textit{nil clean element}\cite{nc, ajd} if $x = n + e$, for some $e\in Idem(R)$ and $n\in Nil(R)$. The ring $R$ is said to be \textit{nil clean} if each element of $R$ is nil clean. An element $x$ of ring $ R$ is said to be a \textit{weakly nil clean element}\cite{cwnc , wnc} $x = n + e$ or $x=n-e$, for some $e\in Idem(R)$ and $n\in Nil(R)$. The ring $R$ is said to be \textit{weakly nil clean} if each element of $R$ is weakly nil clean. The set of nil clean and weakly nil clean elements of $R$ are denoted by $NC(R)$ and $WNC(R)$ respectively.

 In this article, by graph, we mean simple undirected graph. For a graph $G$, $V(G)$ denotes the set of vertices and $E(G)$ denotes the set of edges. For a positive integer $n$, P. Grimaldi\cite{gfr} defined and studied various properties of the unit graph $G(\mathbb{Z}_n)$, of the ring of integers modulo $n$, with vertex set $\mathbb{Z}_n$ and two distinct vertices $x$ and $y$ are adjacent if and only if $x+y$ is a unit. Further in \cite{ug}, authors generalized $G(\mathbb{Z}_n)$ to unit graph $G(R)$, where $R$ is an arbitrary associative ring with non zero identity. Recently D.K. Basnet and J. Bhattacharya \cite{ncg}, defined nil clean graph of a finite commutative ring $R$, where the vertex set is $R$ and two distinct vertices $x$ and $y$ are adjacent if and only if $x+y$ is nil clean and they studied various properties of nil clean graph of finite commutative ring.

In this article we have introduced weakly nil clean graph $G_{WN}(R)$ associated with a finite commutative ring $R$ motivated by the concept of \textit{Nil clean graph} of a finite ring $R$ \cite{ncg}. We define $G_{WN}(R)$ as $R$ is the vertex set and two vertices $x$ and $y$ are adjacent if and only if $x+y$ is a weakly nil clean element of $R$. The properties like girth, diameter, clique number and chromatic index of $G_{WN}(R)$ have been studied.\par
Here we mention some preliminaries about graph theory for this article. Let $G$ be a graph. The degree of the vertex $v\in G$ denoted by $deg(v)$, is the number of edges adjacent with $v$. A graph $G$ is said to be \textit{connected} if for any two distinct vertices of $G$, there is a path in $G$ connecting them. Number of edges on the shortest path between vertices $x$ and $y$ is called the \textit{distance} between $x$ and $y$ and is denoted by $d(x,y)$. If there is no path between $x$ and $y$ then we say $d(x,y)= \infty$. The diameter of a graph $G$, denoted by $diam(G)$, is the maximum of distances of each pair of distinct vertices in $G$. Also \textit{girth} of $G$ is the length of the shortest cycle in $G$, denoted by $gr(G)$. Note that if there is no cycle in $G$ then $gr(G)=\infty$. A complete graph is a simple undirected graph in which every pair of distinct vertices is connected by a unique edge. A bipartite graph $G$ is a graph whose vertices can be divided into two disjoint parts $V_1$ and $V_2$, such that $V(G) = V_1\cup V_2$ and every edge in $G$ has the form $e=(x,y) \in E(G)$, where $x\in V_1$ and $ y \in V_2$. Note that no two vertices both in $V_1$ or both in $V_2$ are adjacent. A complete bipartite graph is a graph where every vertex of the first part $V_1$ is connected to every vertex of the second part $V_2$, denoted by $K_{m,n}$, where $|V_1|=m$ and $|V_2|=n$. A complete bipartite graph $K_{1,n}$ is called star graph. \par
A clique is a subset of vertices of an undirected graph such that its induced subgraph is complete. A clique having $n$ number of vertices is called $n$-clique. The maximum clique of a graph is a clique such that there is no clique with more vertices. The clique number of a graph $G$ is denoted by $\omega (G)$ and defined by the number of vertices in the maximal clique of $G$.  An edge colouring of a graph $G$  is a map  $C: E(G) \rightarrow S$, where $S$ is a set of colours such that for all $e_1, e_2 \in E(G)$, if $e_1$ and $e_2$ are adjacent then $C(e_1) \neq C(e_2)$. The \textit{chromatic index} of a graph $G$ is denoted by $\chi^\prime(G)$ and is defined as the minimum number of colours needed for a proper colouring of $G$.

\section{Weakly nil clean graph}
In this section we will define weakly nil clean graph of a finite commutative ring and discuss its basic properties.
\begin{Def}
  The weakly nil clean graph of a ring $R$ denoted by $G_{WN}(R)$ is defined by setting $R$ as the vertex set and two distinct vertices $x$ and $y$ are adjacent if and only if $x+y$ is weakly nil clean. Here we are not considering any loop at a vertex in the graph.
\end{Def}
From the definition, nil clean graph of a ring $R$ is always a subgraph of a weakly nil clean graph of $R$.
\\ For illustration following are the weakly nil clean graphs of $\mathbb{Z}_{10}$ and $GF(25)$, where $\mathbb{Z}_{10}$ is the ring of integers modulo $10$ and $GF(25)$ is the finite field with $25$ elements.
\begin{figure}[H] 
\begin{pspicture}(0,4)(5,-1)
\scalebox{1}{
\rput(1,0){
\psdot[linewidth=.05](1,3)
\psdot[linewidth=.05](1,0)
\psdot[linewidth=.05](3,2)
\psdot[linewidth=.05](3,1)
\psdot[linewidth=.05](5,3)
\psdot[linewidth=.05](5,0)
\psdot[linewidth=.05](7,2)
\psdot[linewidth=.05](7,1)
\psdot[linewidth=.05](9,3)
\psdot[linewidth=.05](9,0)
\rput(1,3.3){$5$}
\rput(1,-.3){$0$}
\rput(3,2.3){$6$}
\rput(3,.7){$1$}
\rput(5,3.3){$4$}
\rput(5,-.3){$9$}
\rput(7,2.3){$7$}
\rput(7,.7){$2$}
\rput(9,3.3){$3$}
\rput(9,-.3){$8$}

\psline(1,0)(5,0)(3,2)(5,3)(1,3)(1,0)(3,1)(1,3)(3,2)(1,0)(5,3)(3,1)(9,0)(7,1)(9,3)(9,0)(7,2)(5,3)(7,1)(7,2)(9,0)(3,2)(9,3)
\psline(1,3)(5,0)(3,1)(9,3)(7,2)(5,0)(7,1)
}}
\end{pspicture}
\caption{Weakly nil clean graph of $\mathbb{Z}_{10}$.}
\end{figure}
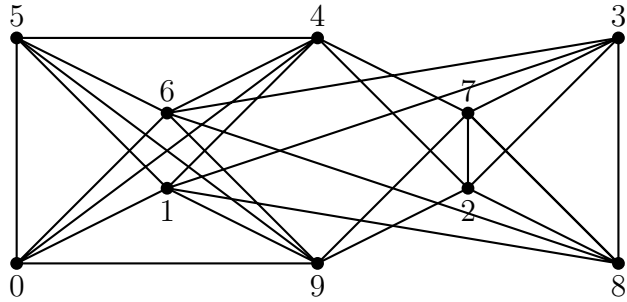
 Since, $ GF(25)\cong \mathbb{Z}_5[x]/<x^2+x+1>=\{ax+b+<x^2+x+1>\,\,:\,\,a,b\in \mathbb{Z}_5\}$.

Define, $\alpha=x+<x^2+x+1> $, then we have $GF(25)=\{0, 1, 2, 3, 4, \alpha, 2\alpha, 3\alpha, 4\alpha, 1+\alpha, 1+2\alpha, 1+3\alpha, 1+4\alpha, 2+\alpha, 2+2\alpha, 2+3\alpha, 2+4\alpha, 3+\alpha, 3+2\alpha, 3+3\alpha, 3+4\alpha, 4+\alpha, 4+2\alpha, 4+3\alpha, 4+4\alpha\}$, Observe that $WNC(GF(25))=\{0,1,4\}$.

\begin{figure}[H]  
\begin{pspicture}(0,3)(0,-7)
\scalebox{.8}{
\rput(5,0){
\psdot[linewidth=.05](2,3)
\psdot[linewidth=.05](0,2)
\psdot[linewidth=.05](6,3)
\psdot[linewidth=.05](2,1)
\psdot[linewidth=.05](6,1)
\psline(0,2)(2,3)(6,3)(6,1)(2,1)(0,2)
\psline(2,3)(2,1)
\rput(2,3.3){$1$}
\rput(0,2.3){$ 0$}
\rput(6,3.3){$3$}
\rput(2.3,1.3){$4$}
\rput(6.3,1.3){$2$}
\psdot[linewidth=.05](1,0)
\psdot[linewidth=.05](5,0)
\psdot[linewidth=.05](-1,-1)
\psdot[linewidth=.05](3,-1)
\psdot[linewidth=.05](7,-1)
\psdot[linewidth=.05](-1,-2)
\psdot[linewidth=.05](3,-2)
\psdot[linewidth=.05](7,-2)
\psdot[linewidth=.05](1,-3)
\psdot[linewidth=.05](5,-3)
\psline(-1,-1)(1,0)(3,-1)(5,0)(7,-1)(7,-2)(5,-3)(3,-2)(1,-3)(-1,-2)(-1,-1)
\psline(-1,-1)(3,-2)
\psline(-1,-2)(3,-1)
\psline(1,0)(7,-1)
\psline(1,-3)(7,-2)
\psline(5,0)(5,-3)
\rput(-1.3,-1){$\alpha$}
\rput(-1.3,-2){$4\alpha$}
\rput(1,0.3){$4\alpha +1$}
\rput(1,-3.3){$\alpha+1$}
\rput(3,-0.7){$\alpha +4$}
\rput(3,-2.3){$4\alpha +4$}
\rput(5,0.3){$4\alpha +2$}
\rput(5,-3.3){$\alpha +2$}
\rput(8,-1){$\alpha+3$}
\rput(8,-2){$4\alpha+3$}

\psdot[linewidth=.05](1,-4)
\psdot[linewidth=.05](5,-4)
\psdot[linewidth=.05](-1,-5)
\psdot[linewidth=.05](3,-5)
\psdot[linewidth=.05](7,-5)
\psdot[linewidth=.05](-1,-6)
\psdot[linewidth=.05](3,-6)
\psdot[linewidth=.05](7,-6)
\psdot[linewidth=.05](1,-7)
\psdot[linewidth=.05](5,-7)
\psline(-1,-5)(1,-4)(3,-5)(5,-4)(7,-5)(7,-6)(5,-7)(3,-6)(1,-7)(-1,-6)(-1,-5)
\psline(-1,-5)(3,-6)
\psline(-1,-6)(3,-5)
\psline(1,-4)(7,-5)
\psline(1,-7)(7,-6)
\psline(5,-4)(5,-7)
\rput(-1.3,-5){$2\alpha$}
\rput(-1.3,-6){$3\alpha$}
\rput(1,-3.7){$3\alpha +1$}
\rput(1,-7.3){$2\alpha+1$}
\rput(3,-4.7){$2\alpha +4$}
\rput(3,-6.3){$3\alpha +4$}
\rput(5,-3.7){$3\alpha +2$}
\rput(5,-7.3){$2\alpha +2$}
\rput(8,-5){$2\alpha+3$}
\rput(8,-6){$3\alpha+3$}
}}
\end{pspicture}
\caption{Nil clean graph of $GF(25)$}\label{gf25}
\end{figure}
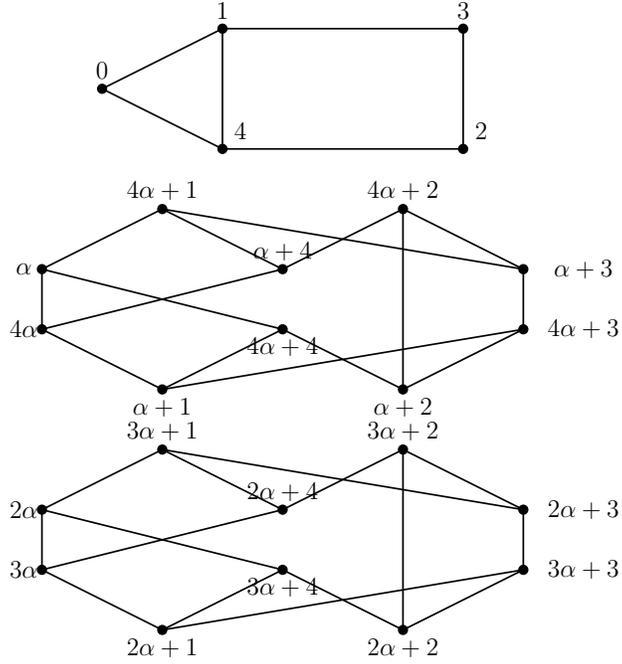
For any prime $p$ the weakly nil clean graph of $\mathbb{Z}_p$ is given below. If $\frac{p-1}{2}\equiv 0 (mod\,\,2)$ then the graph $G_{WN}(\mathbb{Z}_p)$ is

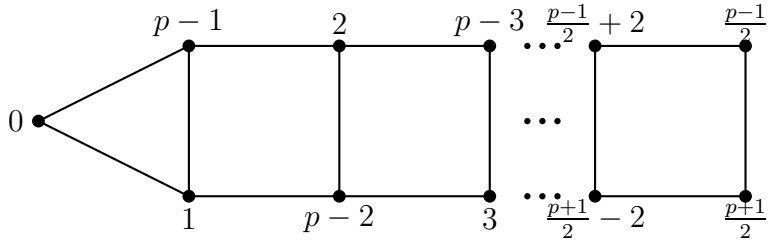
\begin{figure}[H]  
\begin{pspicture}(0,4)(0,0)
\scalebox{1}{
\rput(2,0){
\psdot[linewidth=.05](1,3)
\psdot[linewidth=.05](3,3)
\psdot[linewidth=.05](5,3)
\psdot[linewidth=.007](5.5,3)
\psdot[linewidth=.007](5.7,3)
\psdot[linewidth=.007](5.9,3)
\psdot[linewidth=.05](6.4,3)
\psdot[linewidth=.05](8.4,3)
\psdot[linewidth=.05](1,1)
\psdot[linewidth=.05](3,1)
\psdot[linewidth=.05](5,1)
\psdot[linewidth=.007](5.5,1)
\psdot[linewidth=.007](5.7,1)
\psdot[linewidth=.007](5.9,1)
\psdot[linewidth=.05](6.4,1)
\psdot[linewidth=.05](8.4,1)
\psdot[linewidth=.05](-1,2)
\psdot[linewidth=.007](5.5,2)
\psdot[linewidth=.007](5.7,2)
\psdot[linewidth=.007](5.9,2)
\psline(1,3)(5,3)(5,1)
\psline(1,3)(1,1)(5,1)
\psline(1,1)(-1,2)(1,3)
\psline(3,3)(3,1)
\psline(6.4,3)(8.4,3)(8.4,1)(6.4,1)(6.4,3)

\rput(-1.3,2){$ 0$}

\rput(1,3.3){$p-1$}
\rput(1,.7){$1$}
\rput(3,.7){$p-2$}
\rput(3,3.3){$2$}
\rput(5,0.7){$3$}
\rput(5,3.3){$p-3$}
\rput(6.4,3.3){$\frac{p-1}{2}+2$}
\rput(8.4,3.3){$\frac{p-1}{2}$}
\rput(6.4,0.7){$\frac{p+1}{2}-2$}
\rput(8.4,0.7){$\frac{p+1}{2}$}

}}
\end{pspicture}
\caption{Weakly nil clean graph of $\mathbb{Z}_p$, if $\frac{p-1}{2}\equiv 0 (mod\,\,2)$}\label{Zp1}
\end{figure}
If $\frac{p-1}{2}\equiv 1 (mod\,\,2)$, then the graph $G_{WN}(\mathbb{Z}_p)$ is

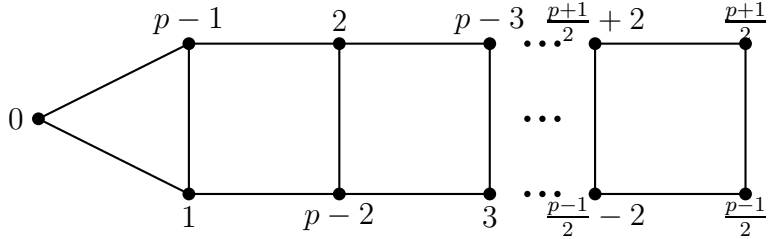
\begin{figure}[H]  
\begin{pspicture}(0,4)(0,0)
\scalebox{1}{
\rput(2,0){
\psdot[linewidth=.05](1,3)
\psdot[linewidth=.05](3,3)
\psdot[linewidth=.05](5,3)
\psdot[linewidth=.007](5.5,3)
\psdot[linewidth=.007](5.7,3)
\psdot[linewidth=.007](5.9,3)
\psdot[linewidth=.05](6.4,3)
\psdot[linewidth=.05](8.4,3)
\psdot[linewidth=.05](1,1)
\psdot[linewidth=.05](3,1)
\psdot[linewidth=.05](5,1)
\psdot[linewidth=.007](5.5,1)
\psdot[linewidth=.007](5.7,1)
\psdot[linewidth=.007](5.9,1)
\psdot[linewidth=.05](6.4,1)
\psdot[linewidth=.05](8.4,1)
\psdot[linewidth=.05](-1,2)
\psdot[linewidth=.007](5.5,2)
\psdot[linewidth=.007](5.7,2)
\psdot[linewidth=.007](5.9,2)
\psline(1,3)(5,3)(5,1)
\psline(1,3)(1,1)(5,1)
\psline(1,1)(-1,2)(1,3)
\psline(3,3)(3,1)
\psline(6.4,3)(8.4,3)(8.4,1)(6.4,1)(6.4,3)

\rput(-1.3,2){$ 0$}

\rput(1,3.3){$p-1$}
\rput(1,.7){$1$}
\rput(3,.7){$p-2$}
\rput(3,3.3){$2$}
\rput(5,0.7){$3$}
\rput(5,3.3){$p-3$}
\rput(6.4,3.3){$\frac{p+1}{2}+2$}
\rput(8.4,3.3){$\frac{p+1}{2}$}
\rput(6.4,0.7){$\frac{p-1}{2}-2$}
\rput(8.4,0.7){$\frac{p-1}{2}$}

}}
\end{pspicture}
\caption{Weakly nil clean graph of $\mathbb{Z}_p$, if $\frac{p-1}{2}\equiv 1 (mod\,\,2)$ }\label{Zp2}
\end{figure}
The following theorem is based on complete graph.
\begin{Thm}
  The weakly nil clean graph $G_{WN}(R)$ is a complete graph if and only if $R$ is a weakly nil clean ring.
\end{Thm}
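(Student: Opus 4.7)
The proof is essentially an unpacking of definitions, and my plan is to handle the two implications separately.

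For the easy direction, suppose $R$ is weakly nil clean. Then for any two distinct vertices $x,y$ of $G_{WN}(R)$, the sum $x+y$ is an element of $R$ and hence is weakly nil clean by hypothesis. By definition of the graph, this makes $x$ adjacent to $y$. Since this works for every pair of distinct vertices, $G_{WN}(R)$ is complete.

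For the converse, suppose $G_{WN}(R)$ is complete. I would pick an arbitrary element $x \in R$ and argue that $x$ is weakly nil clean. If $x \neq 0$, then $0$ and $x$ are distinct vertices and the completeness of the graph forces them to be adjacent, which means $0+x = x$ is weakly nil clean. The only case that needs separate treatment is $x=0$, which is handled by writing $0 = 0+0$ with $0 \in \idem(R) \cap \nil(R)$, so $0$ is trivially weakly nil clean. Thus every element of $R$ is weakly nil clean, i.e.\ $R$ is a weakly nil clean ring.

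The only step that requires any thought at all is the observation that the element $0$ must be dealt with by hand because it is not a vertex distinct from itself, so no edge witnesses its nil-cleanness; but this is immediate from the definition. Beyond that, there is no real obstacle, and the whole argument is a direct translation between the graph-adjacency condition and the ring-theoretic property.
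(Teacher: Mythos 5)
Your proof is correct and follows essentially the same route as the paper: adjacency to $0$ witnesses weak nil cleanness of each nonzero element, and the converse is immediate from the definition. You are in fact slightly more careful than the paper, which asserts ``$r$ is adjacent to $0$'' for all $r$ without separately handling $r=0$ (where no loop exists); your explicit observation that $0=0+0$ is weakly nil clean closes that small gap.
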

\begin{proof}
  Let $G_{WN}(R)$ be a complete graph. For $r\in R$, $r$ is adjacent to $0$, so $r=r+0$ is weakly nil clean element in $R$. Hence $R$ is weakly nil clean ring. Converse is obvious from the definition of weakly nil clean graph.
\end{proof}
\begin{Lem}
  Let $R$ be a ring. If $x+Nil(R)$ and $y+Nil(R)$ are adjacent in $G_{WN}(R/Nil(R))$ then every element of $x+Nil(R)$ is adjacent to each element of $y+Nil(R)$ in the weakly nil clean graph $G_{WN}(R)$.
\end{Lem}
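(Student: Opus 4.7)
My plan is to reduce the adjacency condition in $G_{WN}(R/\mathrm{Nil}(R))$ to a much cleaner statement and then lift the witness back to $R$ using the standard fact that idempotents lift modulo nil ideals. First I would unpack the hypothesis: if $\bar{x}=x+\mathrm{Nil}(R)$ and $\bar{y}=y+\mathrm{Nil}(R)$ are adjacent in $G_{WN}(R/\mathrm{Nil}(R))$, then by definition $\bar{x}+\bar{y}=\bar{n}+\bar{f}$ or $\bar{x}+\bar{y}=\bar{n}-\bar{f}$ for some nilpotent $\bar{n}$ and idempotent $\bar{f}$ in the quotient. Since $R$ is commutative, $\mathrm{Nil}(R)$ is the nilradical, and so $\mathrm{Nil}(R/\mathrm{Nil}(R))=\{\bar{0}\}$. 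The nilpotent term therefore vanishes and the adjacency collapses to the much simpler condition $\bar{x}+\bar{y}=\bar{f}$ or $\bar{x}+\bar{y}=-\bar{f}$ for some idempotent $\bar{f}$ of $R/\mathrm{Nil}(R)$.

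Next I would invoke the classical result that idempotents lift modulo a nil ideal to obtain an idempotent $e\in R$ with $e+\mathrm{Nil}(R)=\bar{f}$. Translating the two cases back to $R$ gives $x+y-e\in\mathrm{Nil}(R)$ or $x+y+e\in\mathrm{Nil}(R)$. Write the corresponding witness as $m:=x+y-e$ or $m:=x+y+e$, an element of $\mathrm{Nil}(R)$.

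Now for arbitrary $n_1,n_2\in\mathrm{Nil}(R)$, consider the sum of the representatives $x+n_1$ and $y+n_2$. Using commutativity and the fact that $\mathrm{Nil}(R)$ is an ideal (so it is closed under addition), I get
\[
(x+n_1)+(y+n_2)=e+(m+n_1+n_2)\quad\text{or}\quad -e+(m+n_1+n_2),
\]
where $m+n_1+n_2\in\mathrm{Nil}(R)$. In either case the sum is a weakly nil clean element of $R$, which is exactly the adjacency condition in $G_{WN}(R)$.

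Finally I would dispose of the distinctness requirement built into the definition of adjacency: one needs $x+n_1\neq y+n_2$. If equality held, then $x-y=n_2-n_1\in\mathrm{Nil}(R)$, forcing $\bar{x}=\bar{y}$ and contradicting the adjacency of $\bar{x}$ and $\bar{y}$ as \emph{distinct} vertices. The only nontrivial ingredient is the lifting of idempotents modulo $\mathrm{Nil}(R)$, which is standard, so the remainder of the argument is bookkeeping.
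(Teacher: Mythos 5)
Your proposal is correct and follows essentially the same route as the paper: lift the idempotent modulo $Nil(R)$, write $x+y=\pm e+m$ with $m\in Nil(R)$, and absorb the extra nilpotents $n_1,n_2$ from the coset representatives into the nilpotent part. Your explicit remarks that $Nil(R/Nil(R))=\{\bar 0\}$ and that the two representatives must be distinct are details the paper leaves implicit, but they do not change the argument.
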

\begin{proof}
  Let $x+Nil(R)$ and $y+Nil(R)$ are adjacent in $G_{WN}(R/Nil(R))$. Then $(x+y)+Nil(R)=e+Nil(R)$ or $(x+y)+Nil(R)=-e+Nil(R)$, where $e\in Idem(R)$, as idempotents lift modulo $Nil(R)$. Thus $x+y=e+n$ or $x+y=-e+n$, where $n\in Nil(R)$. Now for $a\in x+Nil(R)$ and $b\in y+Nil(R)$, we have $a=x+n_1$ and $b=y+n_2$, for some $n_1,n_2\in Nil(R)$. So $a+b=e+(n-n_1-n_2)$ or $a+b=-e+(n-n_1-n_2)$. Hence $a$ and $b$ are adjacent in $G_{WN}(R)$.
\end{proof}
The next Lemma is related to the degree of a vertex in $G_{WN}(R)$.
\begin{Lem}\label{deg}
Let $G_{WN}(R)$ be the weakly nil clean graph of a ring $R$. For $x \in R$ we have the following:
\begin{enumerate}
  \item If $2x$ is weakly nil clean, then $deg(x) = |WNC(R)| - 1$.
 \item If $2x$ is not weakly nil clean, then $deg(x) = |WNC(R)|$.
\end{enumerate}
\end{Lem}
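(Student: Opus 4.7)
The plan is to count neighbors of $x$ by parametrizing them via the map $y \mapsto x+y$. First I would observe that by the definition of $G_{WN}(R)$, a vertex $y$ is a neighbor of $x$ precisely when $y \neq x$ and $x+y \in WNC(R)$. Since $R$ is a group under addition, for each fixed $w \in R$ there is exactly one $y \in R$ with $x+y = w$, namely $y = w - x$. Hence the map
\[
\phi: \{\,y \in R : x+y \in WNC(R)\,\} \longrightarrow WNC(R), \qquad \phi(y) = x+y,
\]
is a bijection. In particular, the set $\{y \in R : x+y \in WNC(R)\}$ has exactly $|WNC(R)|$ elements.

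Next I would account for the one element of this set that must be removed to obtain the true neighborhood, namely $y = x$ itself (since $G_{WN}(R)$ has no loops). The element $y = x$ belongs to the set above if and only if $x + x = 2x \in WNC(R)$. This immediately splits into the two cases of the lemma: if $2x$ is weakly nil clean, then $y = x$ is in the bijected set and must be discarded, giving $\deg(x) = |WNC(R)| - 1$; if $2x$ is not weakly nil clean, then $y = x$ is not in the set at all, and every element of the set is a genuine neighbor, so $\deg(x) = |WNC(R)|$.

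The proof is essentially a counting argument with only one subtlety, namely remembering that loops are excluded by definition of the graph, so the only obstacle is the very minor bookkeeping of checking which case places $x$ in its own would-be neighborhood. No computation with nilpotents or idempotents is needed beyond invoking the definition of $WNC(R)$.
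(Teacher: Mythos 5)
Your argument is correct and is exactly the standard counting argument that the paper invokes: the paper's proof simply states that it is "similar to Lemma 2.4" of the nil clean graph paper, which is precisely this bijection $y \mapsto x+y$ onto $WNC(R)$ followed by discarding $y=x$ when $2x \in WNC(R)$. No substantive difference in approach.
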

\begin{proof}
  Proof is similar to Lemma 2.4 \cite{ncg}.
\end{proof}
The result of connectedness of weakly nil clean graph of $\mathbb{Z}_n$ and $M_n(\mathbb{Z}_n)$ is given below:
\begin{Thm}
 For a ring $R$. The following hold :
 \begin{enumerate}
   \item $G_{WN}(R)$ need not be connected.
   \item Let $R = \mathbb{Z}_n$. For $a \in \mathbb{Z}_n$ there is a path from $a$ to $0$.
   \item $G_{WN}(\mathbb{Z}_n)$ is connected.
   \item Let $R = \mathbb{Z}_n$. For $A \in M_n(\mathbb{Z}_n)$ there is a path from $A$ to $0$, where $0$ is the zero matrix of $M_n(\mathbb{Z}_n)$.
   \item $G_{WN}(M_n(\mathbb{Z}_n))$ is connected.
 \end{enumerate}
 \end{Thm}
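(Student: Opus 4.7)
The plan treats the five claims in sequence. A unifying observation is that $\{0,\,1,\,-1\} \subseteq WNC(R)$ for any ring $R$ with unity, since $0$ and $1$ are idempotent and $-1 = 0 - 1$ realises $-1$ as nilpotent minus idempotent.

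\textbf{Part (i).} I would exhibit $R = GF(25)$ as the counterexample. Since $R$ is a field, $WNC(R) = \{0,1,-1\}$, all lying in the prime subfield $\mathbb{Z}_5$. Writing $GF(25) = \mathbb{Z}_5[\alpha]$, the adjacency condition $x+y \in WNC(R)$ forces the $\alpha$-coefficient of $y$ to be the negative of that of $x$. Hence every walk starting at $a+b\alpha$ has its $\alpha$-coefficients in $\{b,-b\}$, and when $b \neq 0$ cannot reach a vertex of zero $\alpha$-coefficient such as $0$. This already produces several disjoint components.

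\textbf{Parts (ii) and (iii).} For $a \in \mathbb{Z}_n$ I would write down the explicit walk
$$a \,\to\, 1-a \,\to\, a-1 \,\to\, 2-a \,\to\, a-2 \,\to\, \cdots \,\to\, 1 \,\to\, 0,$$
given in closed form by $v_{2k} = a-k$ and $v_{2k+1} = (k+1)-a$. Consecutive sums alternate between $1$ and $0$, both in $WNC(\mathbb{Z}_n)$, and $v_{2a-1}=0$. Part (iii) then follows at once: every vertex is in the component of $0$, hence the graph is connected.

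\textbf{Parts (iv) and (v).} The approach generalises (ii). The key step is an auxiliary lemma: every $A \in M_n(\mathbb{Z}_n)$ is a sum of finitely many weakly nil clean matrices. Decomposing $A = \sum_{i \neq j} a_{ij} e_{ij} + \sum_i a_{ii} e_{ii}$, each off-diagonal $a_{ij} e_{ij}$ is a scalar multiple of the nilpotent matrix unit $e_{ij}$ (so is itself nilpotent, hence weakly nil clean), while each diagonal $a_{ii} e_{ii}$ splits as a sum of copies of the idempotent $\pm e_{ii}$, each weakly nil clean. Writing $A = w_1 + \cdots + w_m$ in this way, I would then build the walk
$$v_{2k} = A - (w_1 + \cdots + w_k), \qquad v_{2k+1} = (w_1 + \cdots + w_{k+1}) - A,$$
whose consecutive sums are $w_1,\,0,\,w_2,\,0,\ldots,w_m,\,0$, all weakly nil clean, and which terminates at $v_{2m} = A-A = 0$. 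This establishes (iv); (v) is then immediate by the same argument as (iii).

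\textbf{Main obstacle.} The chief technicality is the loopless condition $v_k \neq v_{k+1}$ at each step of the walks in (ii) and (iv). This can fail in a few degenerate configurations --- for instance $2a \equiv 1 \pmod n$ with $n$ odd in (ii), or a partial sum $w_1 + \cdots + w_k$ accidentally equalling $A$ in (iv). Each such exception would be handled by inserting a short detour through another element of $WNC$: going via $-a$ rather than $1-a$ in the $\mathbb{Z}_n$ case, or through an off-diagonal matrix unit $e_{ij}$ in the matrix case. Verifying that these detours always exist, and do not themselves create new coincidences, is the main bookkeeping burden of the proof.
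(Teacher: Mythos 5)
Your proposal is correct in substance but takes a genuinely different, self-contained route. The paper disposes of the whole theorem in two lines: part (i) is read off from the picture of $G_{WN}(GF(25))$ (Figure 2), and parts (ii)--(v) are obtained by citing Theorem 2.5 of the nil clean graph paper \cite{ncg} together with the observation that the nil clean graph of $R$ is a spanning subgraph of $G_{WN}(R)$, so any path in the former is a path in the latter. Your argument for (i) is the same example but with the reason made explicit (adjacency forces the $\alpha$-coefficient of a neighbour of $a+b\alpha$ to be $-b$, so vertices with $b\neq 0$ cannot reach $0$), which is a genuine improvement over ``clear from the figure.'' Your walks for (ii)--(v) essentially reconstruct the proof of the cited Theorem 2.5: note that all the consecutive sums you produce ($0$, $1$, $ae_{ij}$, $e_{ii}$) are already nil clean, not merely weakly nil clean (and you could replace $-e_{ii}$ by $n-a_{ii}$ copies of $e_{ii}$), so you are in fact proving connectivity of the nil clean subgraph and inheriting it, which is exactly the paper's logic made explicit. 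What your approach buys is self-containedness; what the paper buys is brevity at the cost of an external reference. One remark on your ``main obstacle'': the degenerate steps $v_k=v_{k+1}$ are less burdensome than you suggest --- simply delete the repeated vertex from the walk; the two sums surrounding the deleted occurrence are still $0$ or $1$ (respectively $0$ or $w_{k+2}$ in the matrix case), and the new consecutive pair cannot also coincide (that would force, e.g., $2a\equiv 2k+1$ and $2a\equiv 2k+2 \pmod n$ simultaneously), so no genuine detour is needed.
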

 \begin{proof}
   $(i)$ It is clear from the graph $G_{WN}(G(25))$, Figure $2$. $(ii),\,\, (iii),\,\,(iv)$ and $(v)$ are clear from Theorem 2.5 \cite{ncg}, as nil clean graph of a ring $R$ is a subgraph of a weakly nil clean graph of $R$.
   \end{proof}

 \section{Invariants of weakly nil clean graph}
 In this section, for any ring $R$, we study about some invariants like girth, clique number, diameter and chromatic index of $G_{WN}(R)$.
\subsection{Girth of $G_{WN}(R)$}
Here in the given theorem we show that the girth of weakly nil clean graph of any ring is $3$.
\begin{Thm}
 For a ring $R$ with $|R|\geq 3$, girth of a weakly nil clean graph $G_{WN}(R)$ is 3.
\end{Thm}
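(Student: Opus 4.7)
The natural plan is to exhibit an explicit triangle in $G_{WN}(R)$, since the lower bound $gr(G_{WN}(R)) \geq 3$ is immediate: the graph is simple with no loops, so no cycle of length $1$ or $2$ exists, and we only need to produce three mutually adjacent vertices.

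The canonical candidates are $0$, $1$, and $-1$. All three lie in $WNC(R)$ by the trivial decompositions $0 = 0+0$, $1 = 0+1$, and $-1 = 0-1$ (nilpotent plus or minus idempotent). Moreover their pairwise sums are
\begin{equation*}
0+1 = 1, \quad 0+(-1) = -1, \quad 1+(-1) = 0,
\end{equation*}
all of which are again weakly nil clean. So whenever $0, 1, -1$ are pairwise distinct, they form the required triangle. This handles every ring of characteristic different from $2$.

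The main obstacle is the characteristic-$2$ case, since then $1 = -1$ and we are left with only two distinct vertices. Here I would use the hypothesis $|R| \geq 3$ to break into structural subcases. If $R$ contains a nonzero nilpotent $n$, then $\{0, 1, n\}$ works: the sums $0+1$, $0+n$, $1+n$ are all weakly nil clean (the last being a nilpotent plus the idempotent $1$). If instead $R$ has a nontrivial idempotent $e \notin \{0,1\}$, then $\{0, 1, e\}$ works: the only nontrivial sum to check is $1+e$, and in characteristic $2$ one computes $(1+e)^2 = 1 + 2e + e^2 = 1+e$, so $1+e$ is itself idempotent and hence weakly nil clean. Since a finite commutative ring with $|R| \geq 3$ that admits neither a nonzero nilpotent nor a nontrivial idempotent is a field (by the structure of finite commutative rings as products of local rings), the remaining worry is the finite-field case in characteristic $2$, which would need to be addressed separately; in any ring that is not such a field, the above case analysis already produces a $3$-cycle and shows $gr(G_{WN}(R)) = 3$.

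I expect the bulk of the write-up to consist of the short verification that the three chosen vertices are distinct and that the three pairwise sums each admit a decomposition $n+e$ or $n-e$; the only genuinely nontrivial observation is the idempotence of $1+e$ in characteristic $2$, which makes the edge $1 \sim e$ available.
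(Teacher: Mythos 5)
Your plan follows the same route as the paper: both hinge on the observation that $\{0,1,-1\}\subseteq WNC(R)$ and that the pairwise sums $1$, $-1$, $0$ are again weakly nil clean, so these three elements form a triangle whenever they are distinct. The paper's proof stops there, silently assuming $1\neq -1$; you correctly identify that this fails in characteristic $2$ and you repair most of it: the triangles $\{0,1,n\}$ for a nonzero nilpotent $n$ and $\{0,1,e\}$ for a nontrivial idempotent $e$ (using $(1+e)^2=1+e$ when $2=0$) are both valid, and your reduction of the leftover case to finite fields of characteristic $2$ via the decomposition into local rings is sound.

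The case you flag as ``to be addressed separately,'' however, cannot be closed, because the theorem is false there. Take $R=GF(4)=\{0,1,\alpha,\alpha+1\}$. Its only idempotents are $0,1$ and its only nilpotent is $0$, so $WNC(GF(4))=\{0,1,-1\}=\{0,1\}$. Two distinct vertices $x,y$ are adjacent iff $x+y\in\{0,1\}$, and $x+y=0$ forces $x=y$ in characteristic $2$, so the only edges are the pairs summing to $1$: the graph is the perfect matching $\{0,1\}$, $\{\alpha,\alpha+1\}$, which has no cycle at all, hence girth $\infty$. The same happens for every $GF(2^k)$ with $k\geq 2$. So your proof has a genuine gap exactly where you said it does, but the fault lies with the statement rather than with your argument: the theorem needs the additional hypothesis that $R$ is not a field of characteristic $2$ (equivalently, by your own case analysis, the conclusion holds for every other finite commutative ring with $|R|\geq 3$). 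The paper's proof does not survive this example either, since it never checks that $0$, $1$, $-1$ are three distinct vertices.
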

\begin{proof}
 For a ring $R$, clearly $\{0,1,-1\}\subseteq WNC(R)$, so the weakly nil graph of $R$ contains the given subgraph
 \begin{figure}[H]
\begin{pspicture}(0,-.5)(0,2.2)
\centering
\scalebox{1}{
\rput(-2,0){
\psdot[linewidth=.05](7,1)
\psdot[linewidth=.05](9,2)
\psdot[linewidth=.05](9,0)
\psline(7,1)(9,2)(9,0)(7,1)
\rput(6.7,1){$0$}
\rput(9.7,2){$-1$}
\rput(9.4,0){$1$}
}}
\end{pspicture}
\caption{A cycle of length $3$ in $G_{WN}(R)$.}
\end{figure}
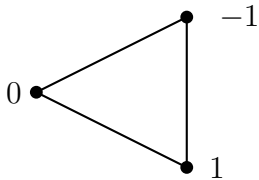
So the girth of a ring $R$ is always $3$.
\end{proof}
\begin{Cor}
  The weakly nil clean graph of a ring $R$ can not be bipartite.
\end{Cor}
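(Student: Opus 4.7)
The plan is to deduce this immediately from the preceding theorem. I first note that, assuming $|R|\geq 3$ (which is the hypothesis under which the girth statement was proved), the preceding theorem exhibits the explicit triangle on the vertices $0,1,-1$ inside $G_{WN}(R)$, so $G_{WN}(R)$ has an odd cycle of length three.

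Next I would invoke the standard graph-theoretic characterization: a simple graph is bipartite if and only if it contains no odd cycle (equivalently, its vertex set admits a $2$-colouring with no monochromatic edge, which forces every cycle to have even length). Since $G_{WN}(R)$ contains a triangle, any attempt to partition $V(G_{WN}(R))=R$ into two parts $V_1,V_2$ with all edges crossing the partition must assign the three vertices $0,1,-1$ to the two parts, forcing at least two of them into the same part, contradicting the fact that every pair among $\{0,1,-1\}$ is adjacent. Hence $G_{WN}(R)$ is not bipartite.

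The only subtlety, rather than an obstacle, is the boundary case $|R|<3$: since the paper's standing convention is that $R$ has $1\neq 0$, the only excluded ring is $R=\mathbb{Z}_2$, where $1=-1$ and the triangle degenerates, and indeed $G_{WN}(\mathbb{Z}_2)=K_2$ is bipartite. So strictly speaking the corollary inherits the hypothesis $|R|\geq 3$ from the theorem it cites, and I would mention this in the write-up rather than give a separate argument. The proof itself is then a single sentence quoting the previous theorem and the no-odd-cycle criterion for bipartiteness.
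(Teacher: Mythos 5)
Your argument is correct and is exactly the intended one: the paper leaves this corollary without an explicit proof, but it follows immediately from the preceding girth theorem (the triangle on $0,1,-1$) together with the standard fact that bipartite graphs contain no odd cycles. Your remark that the statement implicitly inherits the hypothesis $|R|\geq 3$ (since $G_{WN}(\mathbb{Z}_2)=K_2$ is bipartite) is a valid and worthwhile precision that the paper itself omits.
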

\begin{Cor}
  Weakly nil clean graph of any ring $R$ can not be a star graph.
\end{Cor}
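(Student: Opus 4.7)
The plan is to derive this corollary as an immediate consequence of what was just established, by observing that a star graph is a very restricted type of graph that is incompatible with the combinatorial structure already forced on $G_{WN}(R)$.

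First, I would recall that a star graph $K_{1,n}$ is by definition a complete bipartite graph, namely the bipartite graph with parts of sizes $1$ and $n$. In particular, every star graph is bipartite. Then I would simply invoke the immediately preceding corollary: for any ring $R$ (with $|R|\geq 3$), the weakly nil clean graph $G_{WN}(R)$ is not bipartite, hence cannot be a star graph. For the trivial cases $|R|\leq 2$ one can note that no such $G_{WN}(R)$ can be a $K_{1,n}$ with $n\geq 2$, so the statement holds in those cases too by inspection.

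Alternatively (and perhaps more transparently), I would avoid routing through the bipartite corollary and instead appeal directly to the girth theorem: a star graph contains no cycles at all (its girth is $\infty$), whereas $G_{WN}(R)$ has girth $3$ by the theorem, since $\{0,1,-1\}$ always induces a triangle. A graph containing a triangle cannot be a star graph, so the conclusion follows.

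I do not expect any real obstacle here; the corollary is essentially a one-line deduction from the girth computation, with the only mildly delicate point being to dispense with the very small rings where $1=-1$ or $|R|\leq 2$, in which case the definition of a star graph $K_{1,n}$ (usually taken with $n\geq 1$, and more meaningfully with $n\geq 2$) either fails to be matched by $G_{WN}(R)$ or, when it is matched as $K_{1,1}$, makes the statement a matter of convention. I would mention this boundary case briefly and otherwise keep the proof to a single sentence citing the girth theorem.
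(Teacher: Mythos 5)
Your proposal is correct and matches the paper's (implicit) reasoning: the paper states this corollary without proof as an immediate consequence of the girth theorem, exactly as you argue via the triangle on $\{0,1,-1\}$ (equivalently via the non-bipartiteness corollary, since a star is complete bipartite). Your brief handling of the small-ring boundary cases is a reasonable addition but not something the paper concerns itself with.
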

\subsection{Clique number of $G_{WN}(R)$}
Here we study about the clique number of weakly nil clean graph of any finite field and graph $G_{WN}(\mathbb{Z}_{2p})$.
\begin{Thm}\label{C1}
  The clique number of the field $\mathbb{Z}_p$ is $3$, where $p$ is prime  and  $p \geq 3$.
\end{Thm}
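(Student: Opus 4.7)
The plan is first to pin down $WNC(\mathbb{Z}_p)$ explicitly, exhibit a triangle, and then rule out any 4-clique by a short direct calculation based on additive shifts. Since $\mathbb{Z}_p$ is a field, $\mathrm{Idem}(\mathbb{Z}_p) = \{0, 1\}$ and $\mathrm{Nil}(\mathbb{Z}_p) = \{0\}$, so $WNC(\mathbb{Z}_p) = \{0, 1, -1\}$, a set of three distinct elements whenever $p \geq 3$. The vertices $0, 1, -1$ already form a triangle (as established in the girth theorem), so it suffices to show that no $4$-clique exists.

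Assume, toward a contradiction, that $\{a, b, c, d\}$ is a $4$-clique. This already forces $p \geq 5$, since for $p = 3$ the ring $\mathbb{Z}_p$ has only three elements. Because $b, c, d$ are distinct and the shift $x \mapsto a + x$ is injective, the three sums $a+b, a+c, a+d$ are three distinct elements of $WNC(\mathbb{Z}_p) = \{-1, 0, 1\}$, hence must exhaust $\{-1, 0, 1\}$. After relabeling we may assume $a+b = -1$, $a+c = 0$, $a+d = 1$, giving $b = -1 - a$, $c = -a$, $d = 1 - a$. The remaining pairwise sums are then $b+c = -1 - 2a$, $b+d = -2a$, and $c+d = 1 - 2a$: three consecutive integers modulo $p$, pairwise distinct whenever $p \geq 5$. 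Applying the same reasoning to these forces $\{b+c, b+d, c+d\} = \{-1, 0, 1\}$, so the middle value $b + d = -2a$ must equal $0$. Since $p$ is odd, $a = 0$, but then $c = -a = 0 = a$, contradicting the distinctness of the clique.

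The only real obstacle I anticipate is organising the six pairwise-sum constraints cleanly; the shift-injectivity observation above collapses the whole problem to the single remark that three consecutive residues lying in $\{-1, 0, 1\}$ pin $a$ down uniquely, which then clashes with $a$ being distinct from $c = -a$.
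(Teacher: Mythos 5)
Your proof is correct, but it takes a genuinely different route from the paper's. The paper argues via graph-theoretic machinery: by its degree lemma every vertex of $G_{WN}(\mathbb{Z}_p)$ has degree at most $|WNC(\mathbb{Z}_p)|=3$, so all four vertices of a putative $4$-clique would have degree exactly $3$ and the clique would be an isolated $K_4$ component, contradicting the connectedness of $G_{WN}(\mathbb{Z}_p)$ (which the paper imports from the nil clean graph paper) once $p\geq 5$. You instead give a direct arithmetic argument: shift-injectivity forces $\{a+b,a+c,a+d\}=\{-1,0,1\}$, hence $b,c,d$ are determined as $-1-a,-a,1-a$, and the remaining three sums $-1-2a,-2a,1-2a$ must again exhaust $\{-1,0,1\}$, pinning $-2a=0$ (e.g.\ by summing the three elements, $3(-2a)=0$ and $\gcd(3,p)=1$ for $p\geq 5$) and hence $a=0=c$, a contradiction. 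Your version is more self-contained --- it needs neither the degree lemma nor connectedness, and it isolates the precise obstruction (any vertex of a $4$-clique would have to be adjacent to its own negative), which foreshadows the paper's later hands-on classification of $4$-cliques in $\mathbb{Z}_{2p}$. The paper's version is shorter given the lemmas already established, but leans on an external connectedness result. The only point worth tightening in your write-up is the phrase ``the middle value must equal $0$'': it is true, but is cleanest justified by the sum (or by checking that $x=\pm 1$ would put $\pm 2$ in $\{-1,0,1\}$, impossible for $p\geq 5$).
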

\begin{proof}If $p=3$ then clearly $\omega( G_{WN}(\mathbb{Z}_3))=3$. Let $p > 3$.
  The weakly nil clean graph of $\mathbb{Z}_p$ is connected, as nil clean graph of $\mathbb{Z}_n$ is connected and girth of $G_{WN}(\mathbb{Z}_p)$ is $3$. If possible, assume that $\omega( G_{WN}(\mathbb{Z}_p))>3$. Then there exist four vertices say $\{v_1, v_2, v_3, v_4\}$ having $deg(v_i)\geq 3$ and the subgraph induced by $\{v_1, v_2, v_3, v_4\}$ is complete. Since  $G_{WN}(\mathbb{Z}_p)$ is connected and $p\geq 5$, so there exists $v_j\in \{v_1, v_2, v_3, v_4\}$ such that $deg(v_j)>3$, which is a contradiction as $WNC(\mathbb{Z}_p)=\{0, 1, -1\}$.
\end{proof}
It is clear from Figure $5$ \cite{ncg},  that the weakly nil clean graph of $GF(p^n)$ is partitioned into disjoint subgraphs, where one subgraph contains $p$ number of vertices and rest other subgraph contains each of  $2p$ number of vertices.
\begin{Thm}
  The clique number of weakly nil clean graph of any finite field is $3$.
\end{Thm}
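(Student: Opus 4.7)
The plan is to pin down the set $WNC(F)$ for a finite field $F$, use the degree bound from Lemma \ref{deg} to cap the possible clique size, and then rule out a $K_4$ by a direct combinatorial argument on the three allowed sum values. Since $F$ is a field we have $\nil(F)=\{0\}$ and $\idem(F)=\{0,1\}$, so $WNC(F)=\{0,1,-1\}$. I will restrict to the nontrivial case $\mathrm{char}(F)\neq 2$ (where these three values are distinct, so $|WNC(F)|=3$); this is the setting of the previous theorem and the figure alluded to before the statement.

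From Lemma \ref{deg}, every vertex of $G_{WN}(F)$ has degree at most $|WNC(F)|=3$, hence $\omega(G_{WN}(F))\le 4$. On the other hand, Theorem 3.1 already produces the triangle on $\{0,1,-1\}$, so $\omega(G_{WN}(F))\ge 3$. Thus the whole question reduces to excluding a $4$-clique.

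Suppose for contradiction that $\{v_1,v_2,v_3,v_4\}$ induces a $K_4$ in $G_{WN}(F)$. Since the three sums $v_1+v_2$, $v_1+v_3$, $v_1+v_4$ lie in $WNC(F)=\{0,1,-1\}$ and are pairwise distinct (otherwise two of the $v_i$ coincide), after relabelling we may assume
\[
v_2=-v_1,\qquad v_3=1-v_1,\qquad v_4=-1-v_1.
\]
Now force the remaining three edges to belong to $WNC(F)$:
\[
v_3+v_4=-2v_1,\qquad v_2+v_3=1-2v_1,\qquad v_2+v_4=-1-2v_1,
\]
each of which must lie in $\{0,1,-1\}$. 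Intersecting the three resulting constraints on $2v_1$ (using that $2$ is invertible in $F$ because $\mathrm{char}(F)$ is odd) pins $v_1$ down to $0$, which then collapses $v_1=v_2$, contradicting distinctness. Hence no $4$-clique exists and $\omega(G_{WN}(F))=3$.

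The only delicate step is the last one: the combinatorial case check. The main obstacle is bookkeeping, not ideas, so I will be careful to use the odd-characteristic hypothesis exactly when dividing by $2$; the characteristic $2$ case is degenerate ($WNC(F)=\{0,1\}$) and, as the theorem is stated for fields where the previous figure applies, is implicitly outside its scope.
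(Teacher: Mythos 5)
Your proof is correct in substance but follows a genuinely different route from the paper's. The paper first invokes the decomposition of $G_{WN}(GF(p^n))$ into connected components (the prime subfield on $p$ vertices, and the remaining components on $2p$ vertices each), observes that a $4$-clique would have to sit inside one component, and then reuses the connectivity-plus-degree argument of its Theorem~3.4: a vertex of a $4$-clique inside a strictly larger connected component must have degree at least $4$, contradicting the bound $\deg(x)\le|WNC(F)|=3$ from Lemma~\ref{deg}. You instead eliminate a $K_4$ by pure algebra: normalising $v_2=-v_1$, $v_3=1-v_1$, $v_4=-1-v_1$ and intersecting the constraints on $2v_1$. Your version is more self-contained (it does not rely on the component structure imported from Figure~5 of the nil clean graph paper, which the paper never actually proves here), while the paper's version avoids any computation once the component sizes are granted. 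Both share the same starting points: $WNC(F)=\{0,1,-1\}$ and the degree lemma. Your explicit exclusion of characteristic $2$ is fair, and indeed necessary -- for $GF(2^n)$ the clique number is $2$, so the theorem as literally stated is false there; the paper silently makes the same restriction through its reliance on the figure.

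One small repair is needed in your final step: the claim that the intersection of the three constraints forces $2v_1=0$ fails when $p=3$, because there $2=-1$ and $-2=1$, so all three constraint sets coincide with the prime subfield $\{0,1,-1\}$ and $2v_1$ is only pinned down to $\mathbb{F}_3$. The argument still closes, since then $v_1\in\mathbb{F}_3$ and $\{v_2,v_3,v_4\}=-v_1+\mathbb{F}_3=\mathbb{F}_3$ would force $v_1$ to coincide with one of $v_2,v_3,v_4$; but you should check the three cases $2v_1=0,1,-1$ separately (each yields $v_1=v_2$, $v_1=v_4$, $v_1=v_3$ respectively) rather than asserting $v_1=0$ outright.
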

\begin{proof}
Let, $GF(p^n)$ be a field of order $p^n$. Let $\{G_1, G_2, G_3, \cdot \cdot \cdot, G_m\}$ be the disjoint connected subgraphs of $G_{WN}(GF(p^n))$, where $G_1$ contains $p$ number of vertex and $G_i$ contains $2p$ number of vertex, for $2\leq i\leq m$. Clearly
 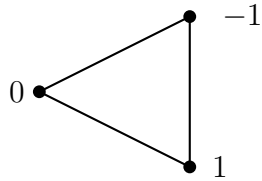
\begin{figure}[H]
\begin{pspicture}(0,-.5)(0,2.2)
\centering
\scalebox{1}{
\rput(-2,0){
\psdot[linewidth=.05](7,1)
\psdot[linewidth=.05](9,2)
\psdot[linewidth=.05](9,0)
\psline(7,1)(9,2)(9,0)(7,1)
\rput(6.7,1){$0$}
\rput(9.7,2){$-1$}
\rput(9.4,0){$1$}
}}
\end{pspicture}
\caption{A cycle of length $3$ in $G_{WN}(GF(p^n))$.}
\end{figure}
is a subgraph of $G_{WN}(GF(p^n))$. If possible let, clique number of $G_{WN}(GF(p^n))$ is 4. Then there exist a $4$-$clique$ which lies in $G_i$, for some $i$ with $1\leq i\leq m$. By similar procedure of Theorem \ref{C1}, we get a contradiction, as required.
\end{proof}
In graph theory, the neighbourhood of a vertex $v$ of a graph $G$ is denoted by $N(v)$ and defined to be the set of all vertices adjacent to $v$. Note that $v\notin N(v)$.
\begin{Lem}\label{L2p}
  For $\mathbb{Z}_{2p}$, where $p\geq 5$ is a prime number, the following hold:
  \begin{enumerate}
    \item $N(a)\cap N(-a)=\phi$, for $a\in \mathbb{Z}_{2p}$ and $\,\,a,-a\notin \{0,1, p, p+1, \frac{p+1}{2}, \frac{p-1}{2}\}$.
    \item $N(a)\cap N(b)=\phi$, for $a,b\in \mathbb{Z}_{2p}$ with $a+b=1$ and $a\notin \{0, 1, p, p+1, \frac{p-1}{2}, \frac{p+1}{2}, \frac{p+3}{2}, \frac{3p-1}{2}, \frac{3p+1}{2}, \frac{3p+3}{2}\}$.
    \item $N(a)\cap N(b)=\phi$, for $a,b\in \mathbb{Z}_{2p}$ with $a+b=-1$ and $a\notin \{0, -1, p, p-1, \frac{p-1}{2}, \frac{p+1}{2}, \frac{3p-3}{2}, \frac{p-3}{2}, \frac{3p-1}{2}, \frac{3p+1}{2}\}$.
  \end{enumerate}
\end{Lem}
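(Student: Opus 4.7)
The plan is to make $WNC(\mathbb{Z}_{2p})$ explicit and then, in each part, reduce the existence of a common neighbour to a linear congruence modulo $2p$ whose solution set is exactly the list of excluded values. Since $2p$ is squarefree, $Nil(\mathbb{Z}_{2p})=\{0\}$, and the Chinese Remainder Theorem gives the four idempotents $\{0,1,p,p+1\}$; hence
\[ WNC(\mathbb{Z}_{2p}) = Idem(\mathbb{Z}_{2p}) \cup \bigl(-Idem(\mathbb{Z}_{2p})\bigr) = \{0,1,-1,p,p+1,p-1\}, \]
using $-p\equiv p$ and $-(p+1)\equiv p-1\pmod{2p}$. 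Among these six elements, $\{1,-1,p\}$ are odd and $\{0,p-1,p+1\}$ are even, and this parity split will govern how $WNC-WNC$ interacts with $2a$, $2a-1$ and $2a+1$ in the three parts.

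For (i), any common neighbour $c\in N(a)\cap N(-a)$ forces $a+c,-a+c\in WNC$, so $2a\in WNC-WNC$. Since $2a$ is always an even residue modulo $2p$, only the even differences matter; after noting $-(p-1)\equiv p+1\pmod{2p}$, these collapse to the five values $\{0,\pm 2,p-1,p+1\}$. Each congruence $2a\equiv d\pmod{2p}$ has the two solutions $d/2$ and $d/2+p$, and collecting them over the five values of $d$ yields exactly the ten residues
\[ \bigl\{0,\,1,\,-1,\,p,\,p+1,\,p-1,\,(p\pm 1)/2,\,(3p\pm 1)/2\bigr\}, \]
which is precisely the union of the forbidden set for $a$ with that for $-a$ in the statement. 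Contrapositively, if both $a$ and $-a$ avoid the listed set, no such $c$ can exist.

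Parts (ii) and (iii) run in parallel. For (ii), put $b=1-a$; a common neighbour $c$ forces $2a-1=(a+c)-(b+c)\in WNC-WNC$, and $2a-1$ is odd. The odd elements of $WNC-WNC$ (after $-p\equiv p$ and $-(p-2)\equiv p+2\pmod{2p}$) reduce to the five values $\{\pm 1,p,p-2,p+2\}$, and solving $2a\equiv 1+d\pmod{2p}$ over these five produces ten residues that match the exclusion list of (ii) term by term. Part (iii) is the mirror image: with $b=-1-a$ the relevant odd quantity is $2a+1$, and solving $2a\equiv d-1\pmod{2p}$ over the same five odd differences reproduces the exclusion list of (iii).

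The main obstacle is the careful bookkeeping rather than any deep structural insight. One has to enumerate $WNC-WNC$ completely, detect the coincidences modulo $2p$ (notably $\pm p$ collapsing and $\pm(p\pm 1)$ swapping in the even case, $\pm(p\pm 2)$ swapping in the odd case), track parity so that what $2a$ or $2a\pm 1$ can realise is matched against the correct subset of $WNC-WNC$, and verify that every residue appearing in the statement's exclusion list is recovered from exactly one of the ten resulting congruences. The single conceptual input—that a common neighbour of two vertices forces their difference to lie in $WNC-WNC$—does all the real work.
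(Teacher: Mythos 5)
Your proof is correct, and it takes a genuinely different route from the paper. The paper's own argument simply enumerates the admissible values of $a$ and declares the conclusion ``clear from Figure 7'' (the drawn weakly nil clean graph of $\mathbb{Z}_{2p}$); it is a proof by inspection of the picture. You instead extract the one structural fact that matters --- a common neighbour $c$ of $a$ and $b$ forces $(a+c)-(b+c)=a-b$ into the difference set $WNC(\mathbb{Z}_{2p})-WNC(\mathbb{Z}_{2p})$ --- and then solve the resulting congruences $2a\equiv d$, $2a\equiv d+1$, $2a\equiv d-1 \pmod{2p}$ over the five even (resp.\ five odd) elements of that difference set. I checked the bookkeeping: the even differences do collapse to $\{0,\pm 2,p-1,p+1\}$, the odd ones to $\{\pm 1,p,p-2,p+2\}$, each congruence has exactly the two solutions $d/2$ and $d/2+p$, and the ten resulting residues reproduce the exclusion lists of (i)--(iii) exactly (for (i), as the union of the stated set with its negative). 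Your version buys rigour and self-containedness --- it does not depend on having drawn the graph correctly, and it shows the excluded lists are not merely sufficient but are precisely the solution sets of the relevant congruences --- at the cost of the explicit parity and sign-collapse bookkeeping, which you flag and which does check out.
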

\begin{proof}
  $(i)$ If $a\notin \{0,1, p+1, p, \frac{p+1}{2}, \frac{p-1}{2}\}$, then the possible values of $a$ are  the elements of $S=\{2,3,\cdot\cdot\cdot, \frac{p-3}{2}, p-2, p-3, \cdot\cdot\cdot, \frac{p+3}{2} \}$ and from Figure $7$, it is clear that $N(a)\cap N(-a)=\phi$, for $a\in S$.\\
  $(ii)$ The possible pairs $(a,b)\in\mathbb{Z}_{2p}\times \mathbb{Z}_{2p}$ satisfying the condition $a+b=1$ and $a\notin \{0, 1, p, p+1, \frac{p-1}{2}, \frac{p+1}{2}, \frac{p+3}{2}, \frac{3p-1}{2}, \frac{3p+1}{2}, \frac{3p+3}{2}\}$ are $S=\{(-1,2), (-2,3), \cdot\cdot\cdot, (\frac{3p+5}{2},\frac{p-3}{2})\}\cup \{(p-1,p+2), (p-2, p+3), \cdot\cdot\cdot, (\frac{p+5}{2}, \frac{3p-3}{2})\}$. For $(a,b)\in S$, from Figure $7$ of weakly nil clean graph of $\mathbb{Z}_{2p}$, we conclude that $N(a)\cap N(b)=\phi$. Similarly $(iii)$ follows from Figure $7$.

\end{proof}
\begin{Thm}\label{TC2p}
  There are exactly five $4$-cliques of $G_{WN}(\mathbb{Z}_{2p})$, where $p\geq 5$ is a prime number.
\end{Thm}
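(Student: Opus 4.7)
The plan is to exploit the Chinese Remainder decomposition $\mathbb{Z}_{2p}\cong\mathbb{Z}_2\times\mathbb{Z}_p$. Since $\nil(\mathbb{Z}_{2p})=\{0\}$ and the idempotents of $\mathbb{Z}_{2p}$ are exactly $\{0,1,p,p+1\}$, the weakly nil clean elements are $\pm$ idempotents, giving
\[WNC(\mathbb{Z}_{2p})=\{0,1,p-1,p,p+1,2p-1\},\]
which under CRT corresponds to $\{0,1\}\times\{-1,0,1\}\subset\mathbb{Z}_2\times\mathbb{Z}_p$. Hence $(x_1,x_2)$ and $(y_1,y_2)$ are adjacent in $G_{WN}(\mathbb{Z}_{2p})$ iff they are distinct and $x_2+y_2\in\{-1,0,1\}\pmod p$; the $\mathbb{Z}_2$-coordinate imposes no sum constraint.

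For any $4$-clique $\{v^{(1)},\dots,v^{(4)}\}$ consider the multiset $M=\{v^{(i)}_2\}_{i=1}^{4}\subset\mathbb{Z}_p$. Because each residue mod $p$ has only two preimages in $\mathbb{Z}_{2p}$, no value appears with multiplicity $\geq 3$, so $M$ has one of three shapes: (A) four distinct values; (B) $\{a,a,b,c\}$ with $a,b,c$ distinct; or (C) $\{a,a,b,b\}$ with $a\ne b$.

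In case (A), fix $i$; the three sums $v^{(i)}_2+v^{(j)}_2$ with $j\ne i$ are three distinct elements of $\{-1,0,1\}$, so their total is $-1+0+1=0$, giving $3v^{(i)}_2+(T-v^{(i)}_2)=0$ where $T=\sum_{j}v^{(j)}_2$. This forces $v^{(i)}_2=-T/2$ for every $i$, contradicting distinctness (we use $p\ne 2$). In case (B), the two vertices sharing component $a$ must be adjacent, so $2a\in\{-1,0,1\}$, i.e.\ $a\in\{0,(p-1)/2,(p+1)/2\}$. A short check shows that $a=(p\pm1)/2$ forces $b+c\equiv \pm 2\pmod p$, which lies outside $\{-1,0,1\}$ for $p\ge 5$; only $a=0$ survives, yielding $\{b,c\}=\{1,-1\}$. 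The two $\mathbb{Z}_2$-coordinates of the vertices at $b$ and $c$ are unconstrained in $\{0,1\}$, giving $2\times 2=4$ distinct $4$-cliques. In case (C), $a,b\in\{0,(p-1)/2,(p+1)/2\}$ with $a+b\in\{-1,0,1\}$; among the three unordered pairs only $\{(p-1)/2,(p+1)/2\}$ satisfies this (sum $\equiv 0\pmod p$), producing exactly $1$ clique. The total is $4+1=5$.

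The main obstacle is the modular-arithmetic bookkeeping in case (B): for each of the three self-paired values of $a$ one has to enumerate the admissible partners $b,c$ in terms of $(p\pm 1)/2$ and $(p\pm 3)/2$ and verify the $b+c$ condition. The hypothesis $p\ge 5$ enters essentially, both in case (A) to ensure the characteristic is not $2$ and in cases (B), (C) to prevent the borderline sums $\pm 2$ from collapsing into $\{-1,0,1\}$ as would happen for $p=3$.
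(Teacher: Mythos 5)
Your proposal is correct, and it reaches the same five cliques as the paper ($\{0,1,-1,p\}$, $\{0,1,p,p-1\}$, $\{0,-1,p,p+1\}$, $\{0,p,p-1,p+1\}$ and $\{\frac{p-1}{2},\frac{p+1}{2},\frac{3p-1}{2},\frac{3p+1}{2}\}$), but by a genuinely different route. The paper works directly in $\mathbb{Z}_{2p}$: it fixes two vertices $a_1,a_2$ of a putative $4$-clique and splits into four cases according to whether $a_1+a_2$ equals $0$, $1$, $-1$, or lies in $\{p,p-1,p+1\}$, leaning on a separate neighbourhood-disjointness lemma (Lemma 3.6) whose own proof is by inspection of the drawn graph in Figure 7. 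You instead pass through the CRT isomorphism $\mathbb{Z}_{2p}\cong\mathbb{Z}_2\times\mathbb{Z}_p$, observe that $WNC(\mathbb{Z}_{2p})$ corresponds to $\{0,1\}\times\{-1,0,1\}$ so that adjacency is governed entirely by the $\mathbb{Z}_p$-coordinate, and then classify the multiset of residues mod $p$ underlying a $4$-clique by its multiplicity pattern. The symmetric-sum argument in your case (A) (summing the three pairwise sums at a fixed vertex to force all four residues to equal $-T/2$) is a clean way to exclude four distinct residues, and the self-pairing condition $2a\in\{-1,0,1\}$ transparently isolates the special residues $0,(p\pm1)/2$; the factor $2\times 2$ from the free $\mathbb{Z}_2$-coordinates then accounts for four of the five cliques at once. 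What your approach buys is a self-contained, figure-free argument in which the hypothesis $p\ge 5$ enters at clearly identified points; what the paper's buys is that it stays in the original ring and exhibits the clique elements explicitly in the notation used elsewhere in the article. The only step you leave compressed is the ``short check'' in case (B), but it is indeed short and verifies as stated.
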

\begin{proof}
 It is clear that $WNC(\mathbb{Z}_{2p})=\{0, 1, -1, p, p-1, p+1 \}$. Let $C=\{a_1,a_2,a_3,a_4\}$ be a $4$-clique of $G_{WN}(\mathbb{Z}_{2p})$. So $a_i+a_j\in WNC(\mathbb{Z}_{2p})$, for any $1\leq i,j\leq4$. Without loss of generality we consider $a_1$ and $a_2$.\\
Case I: If $a_1+a_2=0$ then $a_1=-a_2$. If $0\in C$ then $a_i\in WNC(\mathbb{Z}_{2p})$, for $1\leq i\leq4$ and it gives only two $4$-cliques $viz.$ $\{1,-1,p,0 \}$ and $\{p-1, p+1, p,0\}$.
If $0\notin C$ and $a_1\in WNC(\mathbb{Z}_{2p})$ then $a_1=1$ or $p+1$. If $a_1=1$ then $a_2=-1$ and $a_3=p$ as $0\notin C$, this must imply $a_4=0$, which is a contradiction. Similarly if $a_1=p+1$ then also we get a contradiction. If $0\notin C$ and $a_1\notin WNC(\mathbb{Z}_{2p})$ then from Lemma \ref{L2p} $(i)$, $a_1=\frac{p+1}{2}$ or $\frac{p-1}{2}$. If $a_1=\frac{p+1}{2}$ then $a_2=\frac{3p-1}{2}$ and it gives the $4$-clique $\{\frac{p+1}{2}, \frac{p-1}{2}, \frac{3p+1}{2}, \frac{3p-1}{2}\}$. Similarly if $a_2=\frac{p-1}{2}$ then also it gives the same $4$-clique $\{\frac{p+1}{2}, \frac{p-1}{2}, \frac{3p+1}{2}, \frac{3p-1}{2}\}$.\\
Case II: Let $a_1+a_2=1$. Here $a_i+a_j\in \{-1,p,p-1,p+1\}$ except $a_1+a_2$, where $1\leq i, j \leq 4$. Here two cases arise
\begin{enumerate}
  \item If $a_i\in WNC(\mathbb{Z}_{2p})$ then the possible pair for $(a_1, a_2)$ are $(0,1)$ and $(p,p+1)$. If $a_1=0$ and $a_2=1$ then it gives $a_3=\{-1,p,p-1\}$. Since $p+(p-1)$ and $(-1)+p\in WNC(\mathbb{Z}_{2p})$, so we get the $4$-cliques $\{0,1,-1,p\}$ and $\{0,1,p,p-1\}$. Similarly if $a_1=p$ and $a_2=p+1$ then it gives two $4$-cliques $\{p,p+1,p-1,0\}$ and $\{p,p+1,-1,0\}$.
  \item Let $a_i\notin WNC(\mathbb{Z}_{2p})$, for some $1\leq i \leq 4$. Without loss of generality assume that $i=1$. By Lemma \ref{L2p}, $a_1\in \{\frac{p-1}{2}, \frac{p+1}{2}, \frac{3p-1}{2}\}$. If $a_1=\frac{p+1}{2}$ then it gives the $4$-clique namely $\{\frac{p-1}{2}, \frac{p+1}{2}, \frac{3p-1}{2}, \frac{3p+1}{2}\}$. Again if $a_1=\frac{p-1}{2}$ or $\frac{3p-1}{2}$ then we will get a contradiction that $\{a_1, a_2, a_3, a_4\}$ is a $4$-clique.
\end{enumerate}
Case III: If $a_1+a_2=-1$, then similar to Case II, we conclude that the total possibilities of $4$-cliques are $\{0,-1,1,p\}$, $\{p,p-1,1,0\}$, $\{p,p-1,p+1,0\}$, $\{0,-1,p,p+1\}$ and $\{\frac{p-1}{2}, \frac{3p-1}{2}, \frac{p+1}{2}, \frac{3p+1}{2}\}$.\\
Case IV: Let $a_i+a_j\in \{p, p-1,p+1\}$. Assume $a_1$ is fixed then the other three vertices of the $4$-clique are $p-a_1,\,\,p+1-a_1$ and $p-1-a_1$, so $(p-a_1)+(p+1-a_1)=2p-2a_1+1\in \{p,p-1,p+1\}$. If $2p-2a_1+1=p$ then $a_1=\frac{p+1}{2}$ and $p+1-a_1=\frac{p+1}{2}$, a contradiction. Also $2p-2a_1+1\neq p-1$ and $2p-2a_1+1\neq p+1$. Hence in this case no such $4$-clique exists.\par

  From the above $4$ cases we conclude that the only $4$-cliques of the weakly nil clean graph of $\mathbb{Z}_{2p}$ are $\{0,1,-1,p\},\{0,1,p,p-1\},\{0,-1,p,p+1\},\{0,p,p-1,p+1\}$ and $\{\frac{p-1}{2},\frac{p+1}{2},\frac{3p-1}{2}, \frac{3p+1}{2}\}$

\end{proof}
\begin{Thm}
  The clique number of $G_{WN}(\mathbb{Z}_{2p})$ is $4$.
\end{Thm}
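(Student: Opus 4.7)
The plan is to use Theorem \ref{TC2p} directly: the five explicit $4$-cliques there already give $\omega(G_{WN}(\mathbb{Z}_{2p})) \geq 4$, so it suffices to rule out a $5$-clique. The key observation is that in any $5$-clique $C = \{a_1, a_2, a_3, a_4, a_5\}$, each of the five $4$-subsets $C \setminus \{a_i\}$ is itself a $4$-clique, and therefore must coincide with one of the five $4$-cliques enumerated in Theorem \ref{TC2p}. So I would suppose toward contradiction that such a $C$ exists and analyze where its elements can lie.

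The first step is to notice the structural separation among the five listed $4$-cliques: the first four all lie inside the six-element set $W := \{0, 1, -1, p, p-1, p+1\}$, while the fifth $4$-clique $F := \{\frac{p-1}{2}, \frac{p+1}{2}, \frac{3p-1}{2}, \frac{3p+1}{2}\}$ is disjoint from $W$ (using $p \geq 5$, so $\frac{p\pm 1}{2}$ and $\frac{3p\pm 1}{2}$ avoid $W$). I would split into two cases accordingly. Case~1: some $a_i \in F$. Then for each $j \neq i$, the $4$-subset $C \setminus \{a_j\}$ still contains $a_i$ and hence must be a listed $4$-clique containing $a_i$; the only such listed $4$-clique is $F$ itself. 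So all four of these distinct $4$-subsets equal $F$, which is impossible.

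Case~2: every $a_i$ lies in $W$, so $C$ is a $5$-element subset of $W$. Here the obstacle is an elementary arithmetic check, which I will carry out in $\mathbb{Z}_{2p}$ using $WNC(\mathbb{Z}_{2p}) = \{0, 1, -1, p, p-1, p+1\}$: the sum $1 + (p+1) = p+2$ and the sum $(-1) + (p-1) = p-2$ both fail to lie in $WNC(\mathbb{Z}_{2p})$ when $p \geq 5$. Hence no clique can simultaneously contain $\{1, p+1\}$, nor can it contain $\{-1, p-1\}$. To form a $5$-element clique inside the $6$-element $W$ one would have to exclude at most one element of $W$, but avoiding both forbidden pairs forces excluding at least two elements, leaving at most $4$ vertices — contradiction.

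Both cases collapse, so no $5$-clique exists and $\omega(G_{WN}(\mathbb{Z}_{2p})) = 4$. The hardest part is not really difficulty but bookkeeping: making sure the disjointness $F \cap W = \emptyset$ is justified for all $p \geq 5$, and verifying the two non-adjacency facts $1 + (p+1), (-1) + (p-1) \notin WNC(\mathbb{Z}_{2p})$ cleanly, so that Case~2 reduces to the pigeonhole statement above.
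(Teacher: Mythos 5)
Your proposal is correct, and it rests on the same pillar as the paper's own proof: Theorem \ref{TC2p}, which supplies the lower bound $\omega(G_{WN}(\mathbb{Z}_{2p}))\geq 4$ and forces any hypothetical $5$-clique to contain one of the five listed $4$-cliques. Where you genuinely differ is in how the $5$-clique is eliminated. The paper takes each of the five $4$-cliques in turn and, for a prospective fifth vertex $a$, runs through the six possible values of $a+v\in WNC(\mathbb{Z}_{2p})$ for a suitable vertex $v$ of that clique, reaching a contradiction in every subcase. You instead exploit the structural split of the five cliques into the four contained in $W=\{0,1,-1,p,p-1,p+1\}$ and the one, $F=\{\frac{p-1}{2},\frac{p+1}{2},\frac{3p-1}{2},\frac{3p+1}{2}\}$, disjoint from $W$: if the $5$-clique meets $F$, then the four distinct $4$-subsets through that vertex would all have to equal $F$, which is absurd; otherwise the $5$-clique lies inside the $6$-element set $W$, where the two disjoint non-edges $\{1,p+1\}$ and $\{-1,p-1\}$ (since $p+2, p-2\notin WNC(\mathbb{Z}_{2p})$ for $p\geq 5$) rule out a $5$-clique by pigeonhole. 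This trades the paper's element-by-element sum computations for a cleaner counting argument; the explicit checks you flag ($F\cap W=\emptyset$ and $p\pm 2\notin WNC(\mathbb{Z}_{2p})$ for $p\geq 5$) all hold. One small presentational point: your two cases are exhaustive only because every element of the $5$-clique lies in some $4$-subset and hence in $W\cup F$; this follows at once from your opening observation, but it deserves a sentence.
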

\begin{proof}
  If possible let $\{a,b,c,d,e\}$ be a $5$-clique. Then clearly it contains any one of the $4$-cliques mentioned in Theorem \ref{TC2p}.\\
  Case I: Suppose $\{a,0, 1, -1,p\}$ be a $5$-clique. Then $a+0\in WNC(\mathbb{Z}_{2p})=\{0, 1, -1, p, p-1, p+1 \}$. If $a+0=0,1,-1$ or $p$ then $a=0,1,-1$ or$p$, a contradiction. If $a+0=p+1$ then $a=p+1$ implies $(p+1)+1\in WNC(\mathbb{Z}_{2p})$, a contradiction. Similarly if $a+0=p-1$ then we get $p-2\in WNC(R)$, a contradiction. Hence there is no possibility of $5$-clique containing the $4$-clique $\{0, 1, -1,p\}$.\\
  Case II: If the $5$-clique contains $4$-clique $\{0,1,p,p-1\}$ or $\{0,p,p+1,-1\}$ or $\{p,p+1, p-1,0\}$, then similar as Case I, we get a contradiction.\\
  Case III: Suppose $\{a, \frac{p-1}{2}, \frac{p+1}{2}, \frac{3p-1}{2}, \frac{3p+1}{2}\}$ be a $5$-clique. Then $a+\frac{p+1}{2}\in WNC(\mathbb{Z}_{2p})$. If $a+\frac{p+1}{2}=0$ then $a=\frac{3p-1}{2}$, a contradiction. If $a+\frac{p+1}{2}=1$ then $a=\frac{3p+1}{2}$, a contradiction. If $a+\frac{p+1}{2}=-1$ then $a=\frac{3p-3}{2}$ implies $\frac{3p-3}{2} + \frac{3p-1}{2}=p-2\in WNC(\mathbb{Z}_{2p})$, a contradiction. If $a+\frac{p+1}{2}=p$ then $a=\frac{p-1}{2}$, a contradiction. If $a+\frac{p+1}{2}=p+1$ then $a=\frac{p+1}{2}$, a contradiction and if $a+\frac{p+1}{2}=p-1$ then $a=\frac{p-3}{2}$ implies $\frac{p-3}{2}+\frac{p-1}{2}=p-2\in WNC(\mathbb{Z}_{2p})$, a contradiction.\par
  From the above cases and Theorem \ref{TC2p} we conclude that $5$-clique does not exist in $G_{WN}(\mathbb{Z}_{2p})$. Hence the clique number of $G_{WN}(\mathbb{Z}_{2p})$ is $4$.
\end{proof}

\subsection{Diameter}
\begin{Lem}\label{D1}
  $R$ is weakly nil clean ring if and only if $diam(G_{WN}(R))=1$.
\end{Lem}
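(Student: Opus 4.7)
The plan is to reduce this lemma directly to the earlier theorem stating that $G_{WN}(R)$ is a complete graph if and only if $R$ is a weakly nil clean ring. The key observation is that, since the rings under consideration have $0 \neq 1$, the vertex set $R$ has at least two elements, so $\mathrm{diam}(G_{WN}(R))=1$ is equivalent to saying every pair of distinct vertices lies at distance exactly $1$, i.e.\ every two distinct vertices are adjacent, i.e.\ $G_{WN}(R)$ is a complete graph.

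For the forward direction, I would assume $R$ is weakly nil clean. By the earlier theorem, $G_{WN}(R)$ is complete, so every two distinct vertices are joined by an edge, giving $d(x,y)=1$ for all distinct $x,y \in R$. Hence $\mathrm{diam}(G_{WN}(R))=1$.

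For the reverse direction, I would assume $\mathrm{diam}(G_{WN}(R))=1$. Then for every pair of distinct vertices $x,y \in R$ we have $d(x,y)=1$, so $x$ and $y$ are adjacent; that is, $G_{WN}(R)$ is complete. Applying the earlier theorem once more yields that $R$ is weakly nil clean.

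There is no real obstacle here; the content of the lemma is essentially a rephrasing of the completeness characterization, with the only small point to watch being that we tacitly use $|R|\geq 2$ (guaranteed by the standing assumption that $R$ has a nonzero identity) so that the notion of diameter is meaningful and the equivalence ``diameter $1$ $\Leftrightarrow$ complete'' is valid.
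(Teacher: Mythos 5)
Your proof is correct and follows exactly the route the paper intends: the paper states this lemma without proof, treating it as an immediate consequence of the earlier theorem that $G_{WN}(R)$ is complete if and only if $R$ is weakly nil clean, which is precisely the reduction you carry out. Your remark about needing $|R|\geq 2$ for the equivalence of ``diameter $1$'' with completeness is a sensible point of care that the paper leaves implicit.
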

Here we define an element $x\in R$ weakly nil clean element of type $1$ if $x=n+e$, and is of type $2$ if $x=n-e$, where $n\in Nil(R)$ and $e\in Idem(R)$.
\begin{Thm}
  Let $R$ and $S$ are two weakly nil clean rings but not nil clean rings then $diam(G_{WN}(R\times S))$ is either $2$ or $3$.
\end{Thm}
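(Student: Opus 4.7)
The plan is to bound the diameter below by $2$ and above by $3$ separately. For the lower bound I would show that $R\times S$ itself fails to be weakly nil clean, so Lemma \ref{D1} rules out $\mathrm{diam}=1$. First observe that the ``type-$2$'' elements of any ring form exactly the set $-NC(R)$, since $x=n-e$ is equivalent to $-x=(-n)+e\in NC(R)$. Because $R$ is weakly nil clean but not nil clean, $NC(R)\cup(-NC(R))=R$ with $NC(R)\neq R$, forcing both $NC(R)\setminus(-NC(R))$ and $(-NC(R))\setminus NC(R)$ to be nonempty, and the same holds for $S$. Since nilpotents and idempotents in $R\times S$ factor coordinatewise, $(r,s)\in WNC(R\times S)$ if and only if $(r,s)\in NC(R)\times NC(S)$ or $(r,s)\in(-NC(R))\times(-NC(S))$; choosing $r\in NC(R)\setminus(-NC(R))$ and $s\in(-NC(S))\setminus NC(S)$ then exhibits an element outside $WNC(R\times S)$.

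For the upper bound, the key observation is that $(r,0)$ and $(0,s)$ always lie in $WNC(R\times S)$, because $0$ belongs to both $NC$ and $-NC$ of either factor. Given distinct vertices $x=(r_1,s_1)$ and $y=(r_2,s_2)$, I would consider the walk
\[
x\;\longrightarrow\;(0,-s_1)\;\longrightarrow\;(0,-s_2)\;\longrightarrow\;y.
\]
Its three consecutive sums $(r_1,0)$, $(0,-s_1-s_2)$ and $(r_2,0)$ each have a zero coordinate, and so lie in $WNC(R\times S)$; when the four listed vertices are pairwise distinct, this is a genuine path of length $3$ and gives $d(x,y)\leq 3$.

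The main technical obstacle is handling the degenerate cases in which two of the four vertices coincide, each of which I would dispose of by exhibiting a shorter path. If $s_1=s_2$, the two middle vertices collapse and the walk truncates to $x\to(0,-s_1)\to y$ of length $2$, both edges still valid. If a middle vertex equals its adjacent endpoint, say $(0,-s_1)=x$, then $r_1=0$ and $2s_1=0$ so $x$ is itself weakly nil clean, and the route $x\to(0,-s_2)\to y$ has length $\leq 2$. If $(0,-s_1)=y$ then $r_2=0$ and $s_1+s_2=0$, giving $x+y=(r_1,0)\in WNC(R\times S)$ and $d(x,y)=1$; the remaining collapses are symmetric. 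Together these verify $d(x,y)\leq 3$ for all distinct pairs, so $\mathrm{diam}(G_{WN}(R\times S))\in\{2,3\}$.
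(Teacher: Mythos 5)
Your argument is correct and follows the same overall strategy as the paper: rule out diameter $1$ by showing $R\times S$ is not weakly nil clean, then exhibit explicit paths of length at most $3$ between arbitrary vertices. The differences are in execution, and they work in your favour. For the lower bound the paper simply cites Theorem 2.3 of the weak-nil-clean paper, whereas you reprove it from scratch via the observation that the type-$2$ elements are exactly $-NC(R)$ and that $NC\setminus(-NC)$ and $(-NC)\setminus NC$ are both nonempty when the ring is weakly nil clean but not nil clean; this makes the proof self-contained. For the upper bound the paper splits into three cases according to the ``types'' of the coordinates of the two endpoints and routes through $(0,0)$, $(e_1,0)$, $(0,e_2)$, $(f_1,0)$, while you use the single uniform walk $x\to(0,-s_1)\to(0,-s_2)\to y$, whose validity rests only on the fact that any element of $R\times S$ with a zero coordinate is weakly nil clean (since $0$ is simultaneously of both types). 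Your version avoids the case analysis entirely and, unlike the paper, explicitly checks the degenerate situations where vertices of the walk coincide -- a point the paper glosses over. Both proofs establish the theorem; yours is the tighter write-up.
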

\begin{proof}
  Clearly $R\times S$ is not weakly nil clean ring by Theorem 2.3 \cite{wnc} and $diam(R\times S)\geq 2$ by Lemma \ref{D1}. Let $(x_1,x_2),(y_1,y_2)\in R\times S$, so $x_i=n_i+ e_i$ or $x_i=n_i- e_i$ and $y_i=m_i+ f_i$ or $y_i=m_i-f_i$, where $n_i\in Nil(R)$, $e_i\in Idem(R)$, $m_i\in Nil(S)$, $f_i\in Idem(S)$ and $1\leq i\leq2$. \par
  Case I: If all $x_i$ and $y_i$ are weakly nil clean element of same type then $(x_1,x_2)\leftrightarrow (0,0)\leftrightarrow (y_1,y_2)$ is a path from $(x_1,x_2)$ to $(y_1,y_2)$ in $G_{WN}(R\times S)$. \par
  Case II: If any two from $x_i$ and $y_i$ are weakly nil clean elements of different types then without loss of generality assume $x_1=n_1-e_1$ and all are weakly nil clean elements of type 1. Clearly $(x_1,x_2)\leftrightarrow (e_1,0)\leftrightarrow (0,0)\leftrightarrow (y_1,y_2)$ is a path from $(x_1,x_2)$ to $(y_1,y_2)$.\par
  Case III: If $x_1$ and $x_2$ are weakly nil clean elements of different types and $y_1$ and $y_2$ are also weakly nil clean elements of different types. Assume that $x_1$ and $y_2$ are of type 1 and $x_2$ and $y_1$ are of type 2, then $(x_1,x_2)\leftrightarrow (0,e_2)\leftrightarrow (f_1,0)\leftrightarrow (y_1,y_2)$ is a path from $(x_1,x_2)$ to $(y_1,y_2)$. Similarly for any pair satisfying Case III we get a path of length $3$ from  $(x_1,x_2)$ to $(y_1,y_2)$.\par
  Hence from the above three cases we conclude that the diameter of $G_{WN}(R\times S)$ is either $2$ or $3$.

\end{proof}
The weakly nil clean graph of $\mathbb{Z}_{2p}$ is given below for the next theorem on diameter of $G_{WN}(\mathbb{Z}_{2p})$.
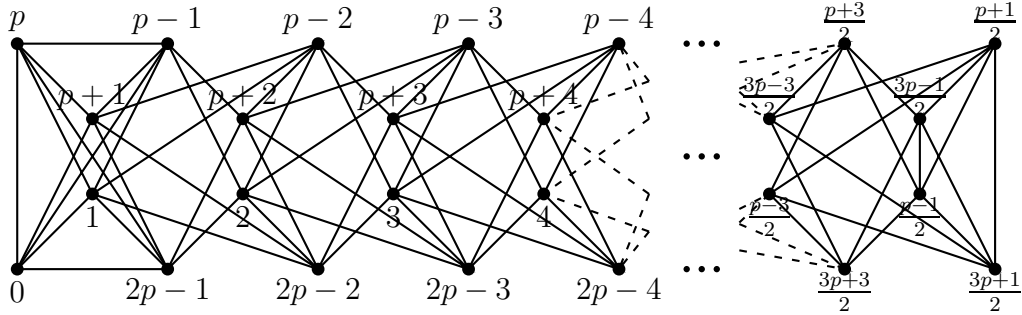
\begin{figure}[H] 
\begin{pspicture}(0,4)(5,-1)
\scalebox{1}{
\rput(1,0){
\psdot[linewidth=.05](-1,3)
\psdot[linewidth=.05](-1,0)
\psdot[linewidth=.05](0,2)
\psdot[linewidth=.05](0,1)
\psdot[linewidth=.05](1,3)
\psdot[linewidth=.05](1,0)
\psdot[linewidth=.05](2,2)
\psdot[linewidth=.05](2,1)
\psdot[linewidth=.05](3,3)
\psdot[linewidth=.05](3,0)
\psdot[linewidth=.05](4,2)
\psdot[linewidth=.05](4,1)
\psdot[linewidth=.05](5,3)
\psdot[linewidth=.05](5,0)
\psdot[linewidth=.05](6,2)
\psdot[linewidth=.05](6,1)
\psdot[linewidth=.05](7,3)
\psdot[linewidth=.05](7,0)
\rput(-1,3.3){$p$}
\rput(-1,-.3){$0$}
\rput(0,2.3){$p+1$}
\rput(0,.7){$1$}
\rput(1,3.3){$p-1$}
\rput(1,-.3){$2p-1$}
\rput(2,2.3){$p+2$}
\rput(2,.7){$2$}
\rput(3,3.3){$p-2$}
\rput(3,-.3){$2p-2$}
\rput(4,2.3){$p+3$}
\rput(4,.7){$3$}
\rput(5,3.3){$p-3$}
\rput(5,-.3){$2p-3$}
\rput(6,2.3){$p+4$}
\rput(6,.7){$4$}
\rput(7,3.3){$p-4$}
\rput(7,-.3){$2p-4$}

\psline(-1,3)(-1,0)(1,0)(0,1)(-1,0)(0,2)(-1,3)(1,3)(0,1)(3,0)
\psline(-1,0)(1,3)(0,2)(3,3)(0,1)(-1,3)(1,0)(2,1)(1,3)(2,2)(1,0)
\psline(1,0)(0,2)(3,0)(2,1)(3,3)(2,2)(3,0)
\psline(5,3)(2,2)(5,0)(2,1)(5,3)(4,2)(5,0)(4,1)(5,3)(6,2)(5,0)(6,1)(5,3)
\psline(3,3)(4,2)(3,0)(4,1)(3,3)
\psline(7,3)(4,2)(7,0)(4,1)(7,3)(6,2)(7,0)(6,1)(7,3)
\psline[linestyle=dashed,dash=3pt 4pt](7.4,1)(6,2)(7.4,2.5)
\psline[linestyle=dashed,dash=3pt 4pt](7.4,2)(6,1)(7.4,.5)
\psline[linestyle=dashed,dash=3pt 4pt](7.4,2)(7,3)(7.4,2.5)
\psline[linestyle=dashed,dash=3pt 4pt](7.4,1)(7,0)(7.4,.5)
\psdot[linewidth=.007](7.9,3)
\psdot[linewidth=.007](8.1,3)
\psdot[linewidth=.007](8.3,3)
\psdot[linewidth=.007](7.9,1.5)
\psdot[linewidth=.007](8.1,1.5)
\psdot[linewidth=.007](8.3,1.5)
\psdot[linewidth=.05](9,2)
\psdot[linewidth=.05](9,1)
\psdot[linewidth=.05](10,3)
\psdot[linewidth=.05](10,0)
\psdot[linewidth=.05](11,2)
\psdot[linewidth=.05](11,1)
\psdot[linewidth=.05](12,3)
\psdot[linewidth=.05](12,0)
\rput(12,3.3){$\frac{p+1}{2}$}
\rput(12,-.3){$\frac{3p+1}{2}$}
\rput(11,2.3){$\frac{3p-1}{2}$}
\rput(11,.7){$\frac{p-1}{2}$}
\rput(10,3.3){$\frac{p+3}{2}$}
\rput(10,-.3){$\frac{3p+3}{2}$}
\rput(9,2.3){$\frac{3p-3}{2}$}
\rput(9,.7){$\frac{p-3}{2}$}
\psline(12,0)(12,3)(11,1)(12,0)(11,2)(12,3)(9,2)(12,0)(9,1)(12,3)
\psline(10,3)(11,2)(10,0)(11,1)(10,3)(9,2)(10,0)(9,1)(10,3)
\psline[linestyle=dashed,dash=3pt 4pt](8.6,2.75)(10,3)(8.6,2.4)
\psline[linestyle=dashed,dash=3pt 4pt](8.6,2.36)(9,2)(8.6,2.2)
\psline[linestyle=dashed,dash=3pt 4pt](8.6,.25)(10,0)(8.6,.6)
\psline[linestyle=dashed,dash=3pt 4pt](8.6,.8)(9,1)(8.6,.62)
\psline(11,2)(11,1)
\psdot[linewidth=.007](7.9,0)
\psdot[linewidth=.007](8.1,0)
\psdot[linewidth=.007](8.3,0)

}}
\end{pspicture}
\caption{Weakly nil clean graph of $\mathbb{Z}_{2p}$.}\label{Z_2p}
\end{figure}
\begin{Thm}
  The following hold for the ring $\mathbb{Z}_n:$
  \begin{enumerate}
    \item If $n=2^k3^l$, for some integer $k\geq 0$ and $l\geq 0$, then $diam(G_{WN}(\mathbb{Z}_n))=1$.

    \item For a prime $p$, $diam(G_{WN}(\mathbb{Z}_p))=\frac{p-1}{2}$.
    \item If $n=2p$, where $p\geq 5$ is an odd prime, then $diam(G_{WN}(\mathbb{Z}_n))=\frac{p-1}{2}$.
    \item For a finite field $F$ having order $p^k$, $diam(G_{WN}(F))=\infty$ if and only if $k>1$.
  \end{enumerate}
\end{Thm}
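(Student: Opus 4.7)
By Lemma~\ref{D1} it suffices to show $\mathbb{Z}_n$ is weakly nil clean when $n=2^k3^l$. I would apply the Chinese Remainder Theorem to write $\mathbb{Z}_n\cong\mathbb{Z}_{2^k}\times\mathbb{Z}_{3^l}$, noting that $\mathbb{Z}_{2^k}$ is nil clean (its residue field $\mathbb{Z}_2$ has every element idempotent) while $\mathbb{Z}_{3^l}$ is weakly nil clean (residue field $\mathbb{Z}_3=\{0,1,-1\}$). Crucially, every $a\in\mathbb{Z}_{2^k}$ admits \emph{both} a nil$+$idem and a nil$-$idem representation: its only idempotents are $0$ and $1$, and either $a$ or $a+1$ is nilpotent. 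For $(a,b)\in\mathbb{Z}_{2^k}\times\mathbb{Z}_{3^l}$ I would then pick the representation of $a$ whose sign matches an available decomposition of $b$, yielding $(a,b)=(n_1,n_2)\pm(e_1,e_2)$, a weakly nil clean form.

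\textbf{Part (ii).} I will translate $x\sim y$ in $G_{WN}(\mathbb{Z}_p)$ into $y=-x+\epsilon$ with $\epsilon\in\{-1,0,1\}$, since $WNC(\mathbb{Z}_p)=\{0,1,-1\}$. Iterating shows that the vertices reachable from $x_0$ along walks of length exactly $k$ are those of the form $(-1)^k x_0+s\pmod p$ with $s\in\{-k,-k+1,\ldots,k\}$. Writing $a=|y-x_0|_p$ and $b=|y+x_0|_p$ (with $|z|_p:=\min(z,p-z)$), the distance $d(x_0,y)$ equals the smaller of the least even integer $\geq a$ and the least odd integer $\geq b$. This minimum exceeds $(p-1)/2$ only if simultaneously $a=(p-1)/2$ is odd and $b=(p-1)/2$ is even, which is a parity contradiction; so $d(x_0,y)\leq(p-1)/2$ in all cases, and the bound is attained at $(x_0,y)=(0,(p-1)/2)$.

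\textbf{Part (iii).} Using $\mathbb{Z}_{2p}\cong\mathbb{Z}_2\times\mathbb{Z}_p$, a direct check gives $WNC(\mathbb{Z}_{2p})=\mathbb{Z}_2\times\{0,1,-1\}$, so $(a,u)\sim(b,v)$ iff $u+v\in\{0,1,-1\}\pmod p$, independent of $a,b$. Any walk in $G_{WN}(\mathbb{Z}_{2p})$ projects to a walk of equal length in $G_{WN}(\mathbb{Z}_p)$, and conversely every path in $G_{WN}(\mathbb{Z}_p)$ lifts (with arbitrary first coordinates) to a path of the same length between any chosen endpoints in the corresponding fibres. Hence $d((a,u),(b,v))=d_p(u,v)$ whenever $u\neq v$, and $d((0,v),(1,v))\leq 2$ in general. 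Since $(p-1)/2\geq 2$ for $p\geq 5$, combining with~(ii) yields $diam(G_{WN}(\mathbb{Z}_{2p}))=(p-1)/2$.

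\textbf{Part (iv) and main obstacle.} For $k=1$, part~(ii) gives $diam=(p-1)/2<\infty$. For $k\geq 2$, I would fix an $\mathbb{F}_p$-complement $V$ with $F=\mathbb{F}_p\oplus V$ and $\dim V\geq 1$; since $WNC(F)=\{0,1,-1\}\subseteq\mathbb{F}_p$, any edge $x\sim y$ forces $y\equiv -x\pmod V$, so by induction on walk length every vertex reachable from $0$ lies in $\mathbb{F}_p\subsetneq F$, making the graph disconnected and $diam=\infty$. The most delicate step I anticipate is the parity bookkeeping in~(ii) -- verifying that the two candidate distance values cannot simultaneously exceed $(p-1)/2$; once that walk decomposition is in hand, the remaining parts reduce to essentially algebraic structural arguments.
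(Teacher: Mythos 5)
Your proposal is correct, and it is considerably more self-contained than what the paper actually does: the paper disposes of (i) by citing Theorem 2.9 of the weak-nil-clean paper (that $\mathbb{Z}_n$ is weakly nil clean exactly when $n=2^k3^l$) together with Lemma 3.9, and disposes of (ii)--(iv) by pointing at Figures 3, 4, 7 and at a figure in the nil-clean-graph paper, with no written argument. Your CRT argument for (i), the walk-parity computation $y_k=(-1)^k x_0+\sum_i(-1)^{k-i}\epsilon_i$ for (ii), the projection/lifting argument between $G_{WN}(\mathbb{Z}_2\times\mathbb{Z}_p)$ and $G_{WN}(\mathbb{Z}_p)$ for (iii), and the $\mathbb{F}_p$-coset argument for (iv) are all sound and in effect supply the proofs the paper omits; in particular your observation that every element of $\mathbb{Z}_{2^k}$ admits decompositions of \emph{both} types is exactly the point that makes the product weakly nil clean. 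Two small things are worth making explicit if you write this up: in (ii) the reverse inclusion of your walk description needs a word about avoiding the degenerate step $y=-x+\epsilon=x$ (always possible since at least two of the three choices of $\epsilon$ give $y\neq x$), and similarly in (iii) a projected walk may contain stationary steps that must be deleted before it is a walk in $G_{WN}(\mathbb{Z}_p)$; and in (iv) the congruence should read $x+y\in\mathbb{F}_p$, i.e.\ $\bar y=-\bar x$ in $F/\mathbb{F}_p$, rather than ``$y\equiv -x \pmod V$''. Neither affects the correctness of the argument.
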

\begin{proof}
  $(i)$ is follows from Theorem $2.9$ \cite{wnc} and Lemma $3.9$. $(ii)$ and $(iii)$ can be shown easily from the weakly nil clean graph of $\mathbb{Z}_p$ in Figure $3$, $4$ and weakly nil clean graph of $\mathbb{Z}_{2p}$ in Figure $7$. $(iv)$ follows from Figure $5$ \cite{ncg}.
\end{proof}
\subsection{Chromatic index}
Let $\vartriangle$ be the maximum vertex degree of $G$, then Vizing’s theorem \cite{gt} gives $\vartriangle \leq \chi^\prime(G) \leq \vartriangle+ 1$. Vizing’s theorem divides the graphs into two classes according to their chromatic index; graphs satisfying $\chi^\prime(G) = \vartriangle$ are called graphs of class $1$, those with $\chi^\prime(G) =\vartriangle + 1$ are graphs of class $2$.

Following theorem shows that $G_{WN}(R)$ is of class $1$.

\begin{Thm}
Let $R$ be a ring then the weakly nil clean graph of $R$ is of class $1$.
\end{Thm}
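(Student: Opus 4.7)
The plan is to exhibit an explicit proper edge coloring of $G_{WN}(R)$ that uses $\Delta(G_{WN}(R))$ colors. Define $C : E(G_{WN}(R)) \to WNC(R)$ by $C(\{x,y\}) := x+y$; this is well defined because adjacency of $x$ and $y$ means $x+y \in WNC(R)$. It is a proper coloring: if two edges $\{x,y\}$ and $\{x,z\}$ meet at a vertex $x$ with $C(\{x,y\}) = C(\{x,z\})$, then $x+y = x+z$ forces $y = z$. Hence distinct edges at a common vertex receive distinct colors, and $C$ uses at most $|WNC(R)|$ colors.

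By Lemma~\ref{deg}, every vertex has degree either $|WNC(R)| - 1$ or $|WNC(R)|$, and $\Delta(G_{WN}(R)) = |WNC(R)|$ exactly when there exists some $x \in R$ with $2x \notin WNC(R)$. If such an $x$ exists, the coloring $C$ already uses at most $\Delta$ colors, so combined with Vizing's lower bound $\chi^\prime(G_{WN}(R)) \geq \Delta$ we conclude $\chi^\prime(G_{WN}(R)) = \Delta$ and $G_{WN}(R)$ is of class $1$.

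The main obstacle is the remaining case in which $2x \in WNC(R)$ for every $x \in R$, so that $\Delta = |WNC(R)| - 1$. Here the naive coloring uses one color too many, and I would need to show that some color class of $C$ is in fact empty. A clean model case is $\operatorname{char}(R) = 2$: then $x + y = 0$ forces $y = x$, which is excluded as a loop, so the color $0 \in WNC(R)$ is never realized and $C$ drops to $|WNC(R)| - 1 = \Delta$ colors automatically. For the general situation under the hypothesis $\{2x : x \in R\} \subseteq WNC(R)$, I would try either to locate a designated $w_0 \in WNC(R)$ whose off-diagonal preimage under $(x,y)\mapsto x+y$ is empty, using the decomposition of the finite commutative ring $R$ into a product of local rings, or else perform a Kempe-chain swap on two color classes to absorb one color into another. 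This second case is the delicate step where I expect the real work to lie.
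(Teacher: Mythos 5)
Your first case reproduces the paper's argument: colour the edge $\{x,y\}$ with $x+y$, check that edges sharing a vertex receive distinct colours, and conclude $\chi^\prime(G_{WN}(R)) \le |WNC(R)|$, which equals $\Delta$ whenever some $2x \notin WNC(R)$. The gap you flag in the remaining case is genuine, but be aware that it cannot be closed, because the statement is false there. If $2x \in WNC(R)$ for every $x \in R$, then by Lemma~\ref{deg} the graph is $(|WNC(R)|-1)$-regular, and a $\Delta$-regular graph of class $1$ decomposes into $\Delta$ perfect matchings and hence has an even number of vertices; so any regular graph of odd order and positive degree is of class $2$. This situation actually occurs: for $R=\mathbb{Z}_3$ (or $\mathbb{Z}_9$, or any $\mathbb{Z}_{3^l}$) one checks $WNC(R)=R$, so $G_{WN}(R)$ is a complete graph of odd order, which is of class $2$. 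Hence no designated empty colour class, Kempe-chain swap, or reduction to local rings can work in general; only special sub-cases such as your $\operatorname{char}(R)=2$ observation (where the colour $0$ is never realised, so $|C| \le |WNC(R)|-1 = \Delta$) are salvageable.

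For comparison, the paper's own proof makes exactly the leap you were trying to avoid: from $\Delta \le |WNC(R)|$ it writes $\chi^\prime(G_{WN}(R)) \ge \Delta = |WNC(R)|$, silently assuming that some vertex attains degree $|WNC(R)|$, i.e.\ that $2x$ fails to be weakly nil clean for some $x$. That assumption fails for, e.g., every weakly nil clean ring, where the graph is complete. So your diagnosis of where the difficulty sits is correct and sharper than the published argument; the honest fix is to add the hypothesis that $2x \notin WNC(R)$ for some $x \in R$ (equivalently, $\Delta = |WNC(R)|$), under which your first paragraph already completes the proof.
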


\begin{proof} We colour the edge $ab$ by the colour $a + b$. By this colouring, every two distinct edges $ab$ and $ac$ have different colours and $C = \{ a + b\,\,|\,\, ab\mbox{ is an edge in }G_{WN}(R) \}$ is the set of colours. Therefore the weakly nil clean graph has a $|C|$-edge colouring and so $\chi^\prime(G_{WN}(R)) \leq |C|$. But $C \subset WNC(R)$ and $\chi^\prime(G_{WN}(R)) \leq |C| \leq |WNC(R)|$. By Lemma \ref{deg},  $\vartriangle \leq |WNC(R)|$ and so by Vizing's theorem, we have $\chi^\prime(G_{WN}(R))\geq \vartriangle = |WNC(R)|$. Therefore $\chi^\prime(G_{WN}(R)) = |WNC(R)| =\vartriangle,$ i.e., $G_{WN}(R)$ is of class $1$.
\end{proof}

\end{document}